\theoremstyle{plain}
\newtheorem{thm}{Theorem}[section]
\newtheorem{lemma}[thm]{Lemma}
\newtheorem{prop}[thm]{Proposition}
\newtheorem{cor}[thm]{Corollary}
\newtheorem*{thm*}{Theorem}
\newtheorem*{lemma*}{Lemma}
\newtheorem*{prop*}{Proposition}
\newtheorem*{cor*}{Corollary}
\newtheorem*{conj*}{Conjecture}
\theoremstyle{definition}
\newtheorem{defn}[thm]{Definition}
\newtheorem{ex}[thm]{Example}
\theoremstyle{remark}
\newtheorem*{rmk}{Remark}
\numberwithin{equation}{section}
\newcommand{\ff}{\mathbb{F}}
\newcommand{\A}{\bf A}
\begin{document}

\title[LEIBNIZ ALGEBRAS]{ON SOME STRUCTURES  OF LEIBNIZ ALGEBRAS}


\author{Ismail Demir, Kailash C. Misra and Ernie Stitzinger}
\address{Department of Mathematics, North Carolina State University, Raleigh, NC 27695-8205}
\email{idemir@ncsu.edu, misra@ncsu.edu, stitz@ncsu.edu}
\thanks{KCM is partially supported by NSA grant \#  H98230-12-1-0248}


\subjclass[2010]{17A32 , 17A60 }

\date{04/02/2013}

\begin{abstract}
Leibniz algebras are certain generalization of Lie algebras. In this paper we survey the important results in Leibniz algebras which are analogs of corresponding results in Lie algebras. In particular we highlight the differences between Leibniz algebras and Lie algebras.
\end{abstract}

\maketitle
\bigskip
\section{Introduction}
\par
Leibniz algebras were introduced by Loday in \cite{lodayfr} as a noncommutative generalization of Lie algebras. Earlier, such algebraic structures had been considered by Bloh who called them $D$-algebras \cite{bloch}. In studying the properties of the homology of Lie algebras, Loday observed that the antisymmetry of the product was not needed to prove the derived property defined on chains. This motivated him to introduce the notion of right (equivalently, left) Leibniz algebras, which is a nonassociative algebra with the right (equivalently, left) multiplication operator being a derivation. Thus a Leibniz algebra satisfies all defining properties of a Lie algebra except the antisymmetry of its product.
\\
\par
Since the introduction of Leibniz algebras around 1993 several researchers have tried to find analogs of important theorems in Lie algebras. 
For example, it is now known that there are analogs of Lie's Theorem, Engel's Theorem, Cartan's criterion, and Levi's Theorem for Leibniz algebras. 
However, some of these results are proved for left Leibniz algebras and some are proved for right Leibniz algebras in the literature. In this paper following Barnes \cite{firstpaper} we focus on left Leibniz algebras and state all important known results for these algebras. If some result is known only for right Leibniz algebras, then (if necessary) we include the modified proof to show that they hold for left Leibniz algebras. We include examples to show that certain results in Lie algebras do not have analogs in Leibniz algebras. 
Throughout this paper unless indicated otherwise all algebras and modules are assumed to be finite dimensional over  an algebraically closed field $\ff$ with characteristic zero.
\\
\par
The paper is organized as follows. In Section 2 we introduce the basic concepts for Leibniz algebras and give examples to illustrate their importance. In particular, we define the modules and representations for Leibniz algebras. In Section 3 we focus on solvable Leibniz algebras and give analogue of Lie's Theorem and Cartan's criterion. We also state the analogue of Levi's Theorem. We consider the nilpotent Leibniz algebras in Section 4. We prove a version of Engel's Theorem using Lie subset and standard results in Lie algebra theory and obtain the analogue of the classical Engel's Theorem as a consequence. We also define Engel subalgebra for a Leibniz algebra following Barnes \cite{firstpaper} and show that a minimal Engel subalgebra is a Cartan subalgebra. In Section 5 we define semisimple Leibniz algebra and an analogue of the Killing form for Leibniz algebras. In particular we show that if the Leibniz algebra is semisimple then the Killing form is nondegenerate, but the converse is not true. These results seem to be new. In the last section (Section 6) we focus on classifications of non-Lie Leibniz algebras of dimensions $2$ and $3$. In particular, to classify three dimensional nilpotent Leibniz algebras we use a new approach involving the canonical forms for the congruence classes of matrices for bilinear forms which can easily be used to classify higher dimensional nilpotent Leibniz algebras. We intend to continue this investigation in near future.

\section{Preliminaries}

In this section we recall the basic definitions for Leibniz algebras and their representations. A (left) Leibniz algebra (or Leibniz algebra) $\A$ is a $\ff$-vector space equipped with a bilinear map (multiplication)
$$
[\, , \,] : {\A} \times {\A} \longrightarrow {\A}
$$
satisfying the Leibniz identity
\begin{equation}\label{leibnizid}
[a, [b, c]]=[[a, b], c]+[b, [a, c]] 
\end{equation} 
for all $a, b, c\in \A$.

For a Leibniz algebra $\A$ and $a \in \A$ , we define the left multiplication operator $L_a : \A \longrightarrow \A$ and the right multiplication operator $R_a : \A \longrightarrow \A$ by $L_a(b)=[a,b]$ and $R_a(b)=[b,a]$ respectively for all $b\in \A$. Note that by (\ref{leibnizid}), the operator $L_a$ is a derivation, but $R_a$ is not a derivation. Furthermore, the vector space $L({\A}) = \{L_a \mid a \in {\A}\}$ is a Lie algebra under the commutator bracket. A right Leibniz algebra is a vector space equipped with a bilinear multiplication such that the right multiplication operator is a derivation. Throughout this paper Leibniz algebra always refers to (left) Leibniz algebra. As the following example shows a (left) Leibniz algebra is not necessarily a (right) Leibniz algebra.

\begin{ex} Let $\A$ be a $2$-dimensional algebra with the following multiplications.
\begin{center} $[x, x]=0, [x, y]=0, [y, x]=x, [y, y]=x$ 
\end{center}
$\A$ is a (left) Leibniz algebra, but it is not a (right) Leibniz algebra, since $[[y, y], y]\neq [y, [y, y]]+[[y, y], y]$. 
\end{ex}

Any Lie algebra is clearly a Leibniz algebra. A Leibniz algebra $\A$ satisfying the condition that $[a, a]=a^2=0$ for all $a\in \A$, is a Lie algebra since in this case the Leibniz identity becomes the Jacobi identity. Given any Leibniz algebra $\A$ we denote $Leib({\A}) = {\rm span}\{[a,a] \mid a \in {\A}\}$. Any associative algebra $\A$ equipped with an idempotent linear operator can be given the structure of a Leibniz algebra as follows (see \cite{loday}).

\begin{ex} Let $\A$ be an associative $\ff$-algebra equipped with a linear operator 
$T:{\A}\rightarrow {\A}$ such that  $T^2=T$. Define the multiplication $[\, , \,] : {\A} \times {\A} \longrightarrow {\A}$ by
\begin{center} $[a, b]:=(Ta)b-b(Ta)$ for all $a, b\in \A$.
\end{center}
It is easy to see that the Leibniz identity (\ref{leibnizid}) holds. So $\A$ becomes a Leibniz algebra. 
\end{ex}
\noindent
Observe that in the above example
$\A$ is a Lie algebra if and only if $T=id$.
\\

Let $\A$ be a Leibniz algebra. Then the Leibniz identity (\ref{leibnizid}) implies the following identities.

\begin{equation}\label{LRidentity}
\begin{cases}
R_{[b, c]}=R_cR_b+L_bR_c \\
L_bR_c=R_cL_b+R_{[b, c]}  \\       
L_cL_b=L_{[c, b]}+L_bL_c  
\end{cases}
\end{equation}

Hence using the first two equations in (\ref{LRidentity}), we obtain $R_cR_b=-R_cL_b$ for all $b, c\in \A$ which implies that for $n \geq 1$, $R_a^n=(-1)^{n-1}R_aL_a^{n-1}$ for $a\in \A$. Therefore, $R_a$ is nilpotent if $L_a$ is nilpotent. For any element $a \in \A$ and $n \in \mathbb{Z}_{>0}$ we define $a^n \in \A$ inductively by defining $a^1=a$ and $a^{k+1} = [a,a^k]$. Similarly, we define ${\A}^n$ by ${\A}^1= \A$ and ${\A}^{k+1} = [\A , \A^k]$. The Leibniz algebra $\A$ is said to be abelian if ${\A}^2 = 0$. Furthermore, it follows from (\ref{leibnizid}) that $L_{a^n} = 0$ for $n \in \mathbb{Z}_{>1}$.

\begin{ex} \label{cyclic} Let $\A$ be a $n$-dimensional Leibniz algebra over $\ff$ generated by a single element $a$. Then ${\A} = {\rm span}\{a, a^2, \cdots , a^n\}$ and we have  $[a, a^n] = \alpha_1 a + \cdots + \alpha_n a^n$ for some $\alpha_1, \cdots \alpha_n \in \ff$. By Leibniz identity we have $ 0 = [a , [a^n , a]] = [[a,a^n], a] + [a^n , [a , a]] = [ \alpha_1 a + \cdots + \alpha_n a^n , a] = \alpha_1 [a , a]$ which implies that $\alpha_1 = 0$. Hence ${\A}^2 = {\rm span}\{a^2, \cdots , a^n\} = Leib({\A})$. The Leibniz algebra $\A$ is called a $n$ - dimensional cyclic Leibniz algebra.
\end{ex}

Let  ${I}$ be a subspace of a Leibniz algebra $\A$. Then $I$ is a subalgebra if $[I,I] \subseteq I$, a left (resp. right) ideal if $[{\A} , I] \subseteq I$ (resp. $[I , \A] \in I$). $I$ is an ideal of $\A$ if it is both a left ideal and a right ideal.  In such case we denote  $I\lhd \A$. In particular, $Leib({\A})$ is an abelian ideal of $A$. By definition $Leib({\A})$ is a right ideal. The fact that $Leib({\A})$ is a left ideal follows from the identity 
$[a , [b , b]] = [a + [b , b] , a + [b ,b]] - [a , a]$. For any ideal $I \lhd \A$ we define the quotient Leibniz algebra in the usual way. In fact, $Leib({\A})$ is the minimal ideal such that ${\A} / Leib({\A})$ is a Lie algebra.  As in case of Lie algebras, the sum and intersection of two ideals of a Leibniz algebra is an ideal. However, the product of two ideals need not be an ideal as shown below (see  \cite{schunck}).

\begin{ex}  Let ${\A} = {\rm span}\{x, a, b, c, d\}$ with multiplications $[a, b]=c, [b, a]=d,  [x, a]=a=-[a, x], [x, c]=c, [x, d]=d, [c, x]=d, [d, x]=-d, $ and the rest are zero. Let $I={\rm span}\{a, c, d\}$ and $J={\rm span}\{b, c, d\}$. Then $I, J$ are ideals of $\A$, but $[I, J]= {\rm span}\{c\}$ which is not an ideal.
\end{ex}

Given two Leibniz algebras $\A$ and $\A'$, a linear map $\varphi : {\A} \longrightarrow {\A'}$ is called a homomorphism if it preserves the multiplications. The kernel of a homomorphism is an ideal and the standard homomorphisms for Lie algebras also hold for Leibniz algebras. In particular, for any ideal $I\lhd \A$, the ideals of the quotient Leibniz algebra ${\A}/ I$ are in one-to-one correspondence with the ideals of $\A$ containing $I$.  

The left center of $\A$ is denoted by $Z^{l}({\A})$= \{$x\in {\A} \mid [x, a]=0$ for all $a\in {\A}$\} and the right center of $\A$ is denoted by $Z^{r}({\A})$= \{$x\in {\A} \mid [a, x]=0$ for all $a\in {\A}$\}. The center of $\A$ is $Z({\A}) = Z^{l}({\A}) \cap Z^{r}({\A})$. Let $H$ be a subalgebra of the Leibniz algebra $\A$. The left normalizer of $H$ in $\A$ is defined by $N^l_{\A}(H)=\{x\in {\A} \mid [x, a]\in H$ {\rm for \, all} $a\in H$\}. The right normalizer is defined by $N^r_{\A}(H)=\{x\in {\A} \mid [a, x]\in H$ {\rm for \, all} $a\in H$\}. The normalizer is defined by $N_{\A}(H) = N^{l}_{\A}(H)\cap N^{r}_{\A}(H)$. It is easy to see that the normalizer $N_{\A}(H)$ and the left normalizer $N^l_{\A}(H)$ are both subalgebras, but the right normalizer $N^r_{\A}(H)$ need not be a subalgebra as is shown by the following example (see \cite{firstpaper}).

\begin{ex} Let ${\A}= {\rm span}\{u, n, k, n^2\}$ with the multiplication given by $[u, n]=u, [n, u]=-u+k, [u,n^2]=k, u^2=0, [u, k]=0, [n, k]=-k, n^3=0$ and $[k, a]=[n^2, a]=0$ for all $a\in {\A}$. Let $H=<u>$. Then $\A$ is a Leibniz algebra, $H$ is a subalgebra and $N^r_{\A}(H)$ is not a subalgebra of $\A$, as $n\in N^r_{\A}(H)$ but $n^2\notin N^r_{\A}(H)$.
\end{ex}

\begin{defn} A module of a Leibniz algebra $\A$ is a vector space $M$ with two bilinear maps $[ \, , \, ] : {\A} \times M \longrightarrow M$ and 
$[ \, , \, ] : M \times {\A} \longrightarrow M$  such that 
\begin{center}
 $[a, [b, m]]=[[a, b], m]+[b, [a, m]]$ 
\par $[a, [m, b]]=[[a, m], b]+[m, [a, b]]$
\par$[m, [a, b]]=[[m, a], b]+[a, [m, b]]$  
\end{center}    
 for all $a, b\in {\A}$  and $m \in M$.

\end{defn}

Let $End(M)$ denote the associative algebra of all endomorphisms of the vector space $M$. If $M$ is a $\A$-module, then each of the maps  $T_a:m\longrightarrow [a, m]$ and   $S_a:m\longrightarrow [m, a]$ is an endomorphism of $M$. Also the maps $T_a:a\longrightarrow T_a, S_a:a\longrightarrow S_a$ from $\A$ into $End(M)$ are linear.

\begin{defn} A representation of a Leibniz algebra $\A$ on a vector space $M$ is a pair $(T, S)$ of linear maps $T: {\A}\longrightarrow End(M)$ with $T(a)=T_a$, $S: {\A} \longrightarrow End(M)$ with $S(a)=S_a$ such that
\begin{center}
 $S_b\circ S_a=S_{[a, b]}-T_a\circ S_b$
\\
$S_b\circ T_a=T_a\circ S_b-S_{[a, b]}$
\\
$T_{[a, b]}=T_a\circ T_b-T_b\circ T_a$
\end{center}
for all $a, b\in A$.

\end{defn}

We say that the pair $(T, S)$ is the associated representation of the $\A$ - module $M$.
We denote $Ker(T, S)=\{a\in A \mid T_a=0=S_a\}=Ker(T)\cap Ker(S)$. Then we have the following result which we need later.

\begin{lemma}\cite{firstpaper} \label{repn} Let $M$ be an irreducible module for the Leibniz algebra ${\A}$ and let $(T, S)$ be the associated representation. Then ${\A}/Ker(T, S)$ is a Lie algebra, and either $S_a=0$ or $S_a=-T_a$ for all $a\in \A$.
\end{lemma}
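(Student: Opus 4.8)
The plan is to exploit the three defining relations of a representation to produce a canonical submodule of $M$ whose triviality or fullness forces the dichotomy on $S$, and then to identify the quotient with a subalgebra of $End(M)$.

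First I would add the first two representation relations, $S_b\circ S_a = S_{[a,b]} - T_a\circ S_b$ and $S_b\circ T_a = T_a\circ S_b - S_{[a,b]}$, to obtain the pivotal identity $S_b\circ(T_a+S_a)=0$ for all $a,b\in\A$. Setting $N=\mathrm{span}\{(T_a+S_a)m : a\in\A,\ m\in M\}$, this says precisely that $S_b(N)=0$ for every $b\in\A$.

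Next I would verify that $N$ is a submodule of $M$. Since $S_b(N)=0\subseteq N$, only closure under the operators $T_c$ needs to be checked. Using the third relation in the form $T_c\circ T_a = T_{[c,a]} + T_a\circ T_c$ together with the second relation rearranged as $T_c\circ S_a = S_a\circ T_c + S_{[c,a]}$, I would compute
\[
T_c(T_a+S_a)m = (T_{[c,a]}+S_{[c,a]})m + (T_a+S_a)(T_c m),
\]
and observe that both summands lie in $N$ (the first via the element $[c,a]\in\A$, the second via the vector $T_c m\in M$). Hence $N$ is a submodule. Because $M$ is irreducible, $N=0$ or $N=M$: if $N=0$ then $T_a+S_a=0$, i.e. $S_a=-T_a$ for all $a$; if $N=M$ then $S_b=S_b\big|_{N}=0$ for all $b$. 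This proves the second assertion.

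For the Lie algebra statement, I would note that in either case $\mathrm{Ker}(T,S)=\mathrm{Ker}(T)$, since $\mathrm{Ker}(S)=\A$ when $S=0$ and $\mathrm{Ker}(S)=\mathrm{Ker}(T)$ when $S=-T$. The third relation says exactly that $a\mapsto T_a$ is a Leibniz homomorphism of $\A$ onto the image $T(\A)=\{T_a : a\in\A\}\subseteq End(M)$, which is closed under the commutator bracket by $T_{[a,b]}=T_a\circ T_b-T_b\circ T_a$ and is therefore a Lie algebra. Its kernel is $\mathrm{Ker}(T)=\mathrm{Ker}(T,S)$, so by the first isomorphism theorem for Leibniz algebras one gets $\A/\mathrm{Ker}(T,S)\cong T(\A)$, a Lie algebra. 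The only genuinely computational step is checking that $N$ is stable under the $T_c$, and the sole place to be careful is keeping the bracket indices straight when combining the second and third relations; everything else follows formally from irreducibility and the first isomorphism theorem.
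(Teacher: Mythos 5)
Your proof is correct, and it is essentially the standard argument of Barnes \cite{firstpaper} that the paper cites for this lemma (the survey itself gives no proof): adding the first two representation axioms yields $S_b\circ(T_a+S_a)=0$, the span $N=\mathrm{span}\{(T_a+S_a)m \mid a\in{\A},\, m\in M\}$ is checked to be a submodule exactly as you do, and irreducibility forces $N=0$ (so $S=-T$) or $N=M$ (so $S=0$). Your reduction $Ker(T,S)=Ker(T)$ in either case, followed by the first isomorphism theorem applied to the bracket-preserving map $a\mapsto T_a$ onto the Lie algebra $T({\A})\subseteq End(M)$, correctly completes the proof, so there is nothing to add.
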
 

\section{Solvability}
In this section we focus on solvable Leibniz algebras and give the analogs of Lie's Theorem, Cartan's Criterion and Levi's Theorem for Leibniz algebras.
Let $\A$ be a Leibniz algebra. Then the series of ideals
\begin{center}
$A\supseteq {\A}^{(1)}\supseteq {\A}^{(2)}\supseteq\ldots   $ where  ${\A}^{(1)}=[{\A}, {\A}], {\A}^{(i+1)}=[{\A}^{(i)}, {\A}^{(i)}]$ 
\end{center}
is called the derived series of $\A$. 

\begin{defn} A Leibniz algebra $\A$ is solvable if ${\A}^{(m)}=0$ for some integer $m\geq 0$.
\end{defn}

It is easy to see that the Leibniz algebra $\A$ is solvable if and only if the associated Lie algebra $L({\A})$ of derivations of $\A$ is solvable.
As in the case of Lie algebras, the sum and intersection of two solvable ideals of a Leibniz algebra is solvable. Hence any Leibniz algebra $\A$ contains  a unique maximal solvable ideal $rad({\A})$ called the radical of $\A$ which contains all solvable ideals. 
We give the following analogue of Lie's theorem which is probably known. 
\begin{thm} \label{Lie}Let $\A$ be a solvable  Leibniz algebra over $\ff$ and $M$ be an irreducible $\A$-module. Then ${\rm dim}(M) =1$. 
\end{thm}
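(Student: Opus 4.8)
The plan is to reduce the statement to the classical Lie's Theorem for solvable Lie algebras by exploiting the structure of the associated representation supplied by Lemma \ref{repn}. Let $(T, S)$ be the associated representation of the irreducible $\A$-module $M$, and put $K = Ker(T, S)$. By Lemma \ref{repn} the quotient $\bar{\A} = {\A}/K$ is a Lie algebra and, moreover, either $S_a = 0$ for all $a \in \A$ or $S_a = -T_a$ for all $a \in \A$.

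The key observation is that in either case the right action $S_a$ is controlled by the left action $T_a$: if a subspace $N \subseteq M$ is stable under every $T_a$, then it is automatically stable under every $S_a$ (trivially when $S_a = 0$, and because $S_a(N) = -T_a(N) \subseteq N$ when $S_a = -T_a$). Hence the $\A$-submodules of $M$ are exactly the subspaces invariant under the left action alone, so $M$ remains irreducible when regarded as a module over $\bar{\A}$ through the induced map $\bar{T} : \bar{\A} \longrightarrow End(M)$. The third representation identity $T_{[a, b]} = T_a \circ T_b - T_b \circ T_a$ says precisely that $T$ carries the bracket to the commutator, so the image $\cg = T({\A})$ is a Lie subalgebra of $End(M)$ (under the commutator bracket) acting irreducibly on $M$.

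Next I would check that solvability descends. Since $\A$ is solvable and the derived series of a homomorphic image is the image of the derived series, $\bar{\A}$ is a solvable Lie algebra; as $\cg$ is a homomorphic image of $\bar{\A}$, it too is solvable. Thus $\cg$ is a solvable Lie subalgebra of $End(M)$ and $M$ is an irreducible $\cg$-module over the algebraically closed field $\ff$ of characteristic zero.

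Finally I would invoke the classical Lie's Theorem: a solvable Lie algebra over an algebraically closed field of characteristic zero possesses a common eigenvector in every finite-dimensional representation, hence a one-dimensional invariant subspace. Irreducibility of $M$ forces that subspace to equal $M$, whence $\dim(M) = 1$. I expect the only delicate point to be the reduction carried out in the second paragraph: one must use the dichotomy of Lemma \ref{repn} to ensure that replacing the full Leibniz action by the single left action does not enlarge the lattice of invariant subspaces, since otherwise irreducibility over $\A$ would not transfer to irreducibility over $\cg$. Everything downstream of that reduction is a direct appeal to standard Lie theory.
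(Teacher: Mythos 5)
Your proof is correct and takes essentially the same approach as the paper's: both reduce to the classical Lie's Theorem for the solvable Lie algebra of left-action operators $T({\A})$, and both use the dichotomy of Lemma \ref{repn} to control the right action $S$. The only difference is the order of steps---the paper first extracts a common eigenvector $m$ and then uses Lemma \ref{repn} to show ${\rm span}\{m\}$ is an $\A$-submodule, whereas you first show every $T$-invariant subspace is automatically an $\A$-submodule so that irreducibility transfers to $T({\A})$ before invoking Lie's Theorem.
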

\begin{proof} Since $\A$ is a solvable Leibniz algebra we have the associated Lie algebra $L({\A})$ solvable. By Lie's theorem for Lie algebras, there exists a nonzero $m\in M$ such that $m$ is $L({\A})$-invariant. Since $M$ is an irreducible $\A$- module, we have by Lemma \ref{repn}, $[m, a]=0$ or $[a, m]=-[m, a]$ for all $a\in \A$. In either case we have $[a, m], [m, a]\in {\rm span}\{m\}$ and ${\rm span}\{m\}$ is an $\A$ - submodule of $M$. Since $M$ is irreducible, it follows that $M = {\rm span}\{m\}$. Therefore, we have ${\rm dim}(M) =1$.
\end{proof}

\begin{cor} \label{flag} Let $\A$ be a $n$-dimensional solvable Leibniz algebra over $\ff$. Then there is a chain of ideals
\begin{center} 
$0=I_0\subseteq I_1\subseteq I_2\subseteq \ldots \subseteq I_n=\A$
\end{center} 
of $\A$ such that $dim(I_i)=i$ for $i=1, 2, \dots ,n$.
\end{cor}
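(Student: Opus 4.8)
The plan is to regard $\A$ as a module over itself and to extract the required chain from a composition series, using Theorem \ref{Lie} to force every composition factor to be one dimensional. First I would observe that $\A$ becomes an $\A$-module under the left and right multiplication maps $[a,m]$ and $[m,a]$ (for $a\in\A$ and $m$ regarded as an element of the module $\A$): the three module identities in the definition are, term for term, exactly three instances of the Leibniz identity (\ref{leibnizid}). Under this module structure a subspace is a submodule precisely when it is invariant under both left and right multiplication by $\A$, that is, precisely when it is an ideal of $\A$.

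Next, since $\A$ is finite dimensional it admits a composition series $0=M_0\subset M_1\subset\cdots\subset M_k=\A$ of submodules in which each quotient $M_i/M_{i-1}$ is an irreducible $\A$-module. Each such quotient is a module for the solvable Leibniz algebra $\A$, so Theorem \ref{Lie} applies and gives $\dim(M_i/M_{i-1})=1$. Consequently every step of the series raises the dimension by exactly one, so $k=n$ and $\dim(M_i)=i$. Setting $I_i=M_i$ then produces the required chain of ideals $0=I_0\subseteq I_1\subseteq\cdots\subseteq I_n=\A$ with $\dim(I_i)=i$.

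The argument can equivalently be phrased as an induction on $\dim\A$: a minimal nonzero submodule of the regular module is an irreducible $\A$-module, hence one dimensional by Theorem \ref{Lie}, which supplies a one dimensional ideal $I_1$; one then applies the inductive hypothesis to the solvable quotient $\A/I_1$ of dimension $n-1$ and lifts the resulting chain back to $\A$ using the one-to-one correspondence between ideals of $\A/I_1$ and ideals of $\A$ containing $I_1$ noted earlier in the excerpt.

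I do not expect a genuine obstacle here; the points requiring care are the conceptual ones, namely checking that the regular action really does make $\A$ an $\A$-module (so that the three module identities coincide with the Leibniz identity), that submodules of this module are exactly the ideals of $\A$, and that each composition factor $M_i/M_{i-1}$ is again a module over the same solvable algebra $\A$ so that Theorem \ref{Lie} is applicable. Once these are in place the dimension count is immediate.
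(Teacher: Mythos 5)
Your proof is correct and is essentially the paper's argument: the paper proceeds by induction on $\dim(\A)$, taking a minimal ideal $J$ (an irreducible submodule of the regular module), concluding $\dim(J)=1$ from Theorem \ref{Lie}, and lifting the chain from $\A/J$ — exactly the inductive rephrasing you give at the end, of which your composition-series formulation is just the non-inductive packaging. The points you flag for care (the regular action satisfies the three module identities, submodules are precisely ideals) are genuine but routine, and the paper treats them implicitly.
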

\begin{proof} We will prove it by the induction on $dim({\A})$. If $\A$ is $1$-dimensional then the statement holds. Let $J$ be a minimal ideal. Then $J$ is an irreducible $\A$- module. Hence by Theorem \ref{Lie}, 
${\rm dim}(J) = 1$. Now by the induction hypothesis the $(n-1)$- dimensional Leibniz algebra ${\A}/J$ has a chain of ideals 
\begin{center} $0=I_1/J\subseteq I_2/J \subseteq I_3/J \subseteq \ldots \subseteq I_{n}/J={\A}/J$
\end{center}
such that $dim(I_i/J)=i-1$ for $i=1, 2,\ldots ,n$. Then we have the desired chain of ideals
\begin{center} 
$0=I_0  \subseteq I_1=J \subseteq I_2\subseteq \ldots \subseteq I_n=\A$
\end{center}  
of $\A$ with $dim(I_i)=i$ for $i=1, 2, \dots ,n$.
\end{proof}
The following result now follows from Corollary \ref{flag}.

\begin{cor} \label{uptri}Let $\A$ be a solvable Leibniz algebra. Then the left multiplication operators $\{L_x \mid x \in {\A}\}$ can be simultaneously upper triangularized.
\end{cor}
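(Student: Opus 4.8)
The plan is to read off the simultaneous triangularization directly from the flag of ideals produced in Corollary \ref{flag}, so that solvability is used only through that corollary. First I would invoke Corollary \ref{flag} to obtain a chain $0 = I_0 \subseteq I_1 \subseteq \cdots \subseteq I_n = \A$ of ideals of $\A$ with $\dim(I_i) = i$. The essential point is that each $I_i$ is in particular a left ideal, so $[\A, I_i] \subseteq I_i$; equivalently $L_x(I_i) \subseteq I_i$ for every $x \in \A$. Thus this single flag is invariant under every left multiplication operator at once, and, crucially, the flag does not depend on the choice of $x$.

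Next I would fix a basis adapted to the flag: choose $e_i \in I_i \setminus I_{i-1}$ for $i = 1, \ldots, n$, so that $\{e_1, \ldots, e_i\}$ is a basis of $I_i$ for each $i$. Since $L_x(e_i) \in L_x(I_i) \subseteq I_i = {\rm span}\{e_1, \ldots, e_i\}$, the coordinate vector of $L_x(e_i)$ has vanishing entries in positions $i+1, \ldots, n$. Hence the matrix of $L_x$ in this ordered basis is upper triangular. Because the basis was chosen once and for all, independently of $x$, the very same basis upper triangularizes every $L_x$, which is exactly the asserted simultaneous triangularization.

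There is essentially no analytic obstacle at this last step; the only thing to verify carefully is that the members of the chain really are (two-sided, hence left) ideals rather than mere subspaces, which is precisely what Corollary \ref{flag} supplies. I would also note that no further appeal to the Leibniz identity is needed here, since all the representation-theoretic content was already absorbed into Theorem \ref{Lie} and Corollary \ref{flag}.
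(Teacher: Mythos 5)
Your proposal is correct and follows exactly the route the paper intends: the paper simply states that the corollary ``follows from Corollary \ref{flag},'' and your argument---taking the flag of ideals, noting each $I_i$ is in particular a left ideal so $L_x(I_i)\subseteq I_i$, and choosing a basis adapted to the flag once and for all---is precisely the standard way of filling in that implication. No gaps; your emphasis that the flag (and hence the basis) is independent of $x$ is exactly the point that makes the triangularization simultaneous.
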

Now we have the following analogue of Cartan's criterion for Leibniz algebras which is given in \cite{weight}. We include an easy proof using the  corresponding Lie algebra result.

\begin{thm}\label{Cartan} Let $A$ be a Leibniz algebra over $\ff$. Then $A$ is solvable if and only if $tr(L_aL_b)=0$ for all $a\in A^2$ and all $b\in A$.
\end{thm}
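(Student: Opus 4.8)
The plan is to deduce the statement directly from the classical Cartan criterion applied to the linear Lie algebra $L(A) = \{L_a \mid a \in A\} \subseteq \mathfrak{gl}(A)$, exploiting the fact recorded earlier that $A$ is solvable if and only if $L(A)$ is solvable. In this way both directions of the equivalence are handled at once.

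First I would record the identity that lets the left regular representation act like a Lie homomorphism on brackets. The third relation in (\ref{LRidentity}) reads $L_cL_b = L_{[c,b]} + L_bL_c$, that is $L_{[c,b]} = L_cL_b - L_bL_c = [L_c, L_b]$. Since $A^2 = [A,A]$ is spanned by the elements $[x,y]$ and $L$ is linear, this gives $L(A^2) = \mathrm{span}\{L_{[x,y]}\} = \mathrm{span}\{[L_x, L_y]\} = [L(A), L(A)]$. Thus the set of operators $\{L_a \mid a \in A^2\}$ is exactly the derived subalgebra of the Lie algebra $L(A)$, and this is the one place where the Leibniz identity is genuinely used.

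Next I would translate the trace hypothesis. The form $(a,b) \mapsto \mathrm{tr}(L_aL_b)$ is precisely the trace form of the linear Lie algebra $L(A)$ acting on the vector space $A$, with $X = L_a$ and $Y = L_b$ regarded as endomorphisms of $A$. By linearity of trace and of $L$, together with the identification from the previous step, the condition ``$\mathrm{tr}(L_aL_b)=0$ for all $a \in A^2$ and all $b \in A$'' is equivalent to ``$\mathrm{tr}(XY)=0$ for all $X \in [L(A), L(A)]$ and all $Y \in L(A)$.''

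Finally, Cartan's criterion for Lie algebras asserts that a linear Lie algebra $\mathfrak{g} \subseteq \mathfrak{gl}(V)$ is solvable if and only if $\mathrm{tr}(XY)=0$ for all $X \in [\mathfrak{g}, \mathfrak{g}]$ and all $Y \in \mathfrak{g}$. Applying this with $\mathfrak{g} = L(A)$ and $V = A$ shows that $L(A)$ is solvable iff the translated trace condition holds; combining with ``$A$ solvable iff $L(A)$ solvable'' yields the theorem. The only real point requiring care is the bookkeeping of the equivalence $[L(A),L(A)] = L(A^2)$, since without it the trace condition would not detect the derived algebra of $L(A)$. As a sanity check on the ``only if'' direction one may argue concretely from Corollary \ref{uptri}: after simultaneously upper-triangularizing all $L_x$, any $L_a$ with $a \in A^2$ is a combination of commutators and hence strictly upper triangular, so $L_aL_b$ is strictly upper triangular and has zero trace; but the uniform appeal to Cartan's criterion subsumes this.
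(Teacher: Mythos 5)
Your proposal is correct and takes essentially the same route as the paper: both reduce the statement to the Lie algebra $L(A)$ via the identity $L_{[x,y]}=[L_x,L_y]$ together with the fact that $A$ is solvable if and only if $L(A)$ is, and then invoke the classical Cartan criterion (the paper proves the forward direction by Lie's theorem and simultaneous upper-triangularization, which is exactly your closing sanity check). Your only refinement is to make the bookkeeping $L(A^2)=[L(A),L(A)]$ explicit, a step the paper leaves implicit in its converse direction.
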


\begin{proof} Let $\A$ be solvable. Then  $L({\A})$ is a solvable Lie algebra of linear operators on $\A$. Hence by Lie's Theorem the left multiplication operators $L_x$ can be simultaneously represented by matrices in upper triangular form. Then  the matrix of 
$L_{[x, y]}=[L_x, L_y]$ is in strictly upper triangular form as that of  $L_{[x, y]}L_z$. Hence $tr(L_{[x, y]}L_z)=0$ for all $x, y, z\in \A$.
\\
 \par
Conversely, if the condition holds, then $tr(ST)=0$ for all $S\in [L({\A}), L({\A})]$ and $T\in L({\A})$. Then $L({\A})$ is solvable by Cartan's criterion for Lie algebras. Hence $\A$ is solvable.
\end{proof}

We also have the following analogue of Levi's Theorem for Leibniz algebras. However, as shown in \cite{barneslevi} Malcev's congugacy theorem does not hold for Leibniz algebras.

\begin{thm} \cite{barneslevi} \label{levi} Let $\A$ be a Leibniz algebra over $\ff$  and $R = rad({\A})$ be its solvable radical. Then there exists a subalgebra $S$ (which is a semisimple Lie algebra) of $\A$ such that ${\A}=S\dotplus R$.
\end{thm}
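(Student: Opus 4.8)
The plan is to reduce the statement to the classical Levi theorem for Lie algebras, isolating the genuinely Leibniz-specific difficulty in a single abelian extension. First I would record that $Leib({\A})$, being an abelian (hence solvable) ideal, is contained in $R$, and that $\mathfrak{g} = {\A}/Leib({\A})$ is a Lie algebra. I would check that the solvable radical of $\mathfrak{g}$ is exactly $R/Leib({\A})$: the latter is a solvable ideal of $\mathfrak{g}$, and conversely the preimage in ${\A}$ of any solvable ideal of $\mathfrak{g}$ is an extension of a solvable algebra by the solvable algebra $Leib({\A})$, hence solvable, hence contained in $R$.

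Next, applying the classical Levi theorem to the Lie algebra $\mathfrak{g}$ I would obtain a semisimple Lie subalgebra $\mathfrak{s} \subseteq \mathfrak{g}$ with $\mathfrak{g} = \mathfrak{s} \dotplus (R/Leib({\A}))$. Writing $\pi : {\A} \to \mathfrak{g}$ for the quotient map, I set $\tilde{S} = \pi^{-1}(\mathfrak{s})$. As the preimage of a subalgebra under a homomorphism this is a subalgebra of ${\A}$, it contains $\ker \pi = Leib({\A})$, and a routine intersection count gives $\tilde{S} + R = {\A}$ and $\tilde{S} \cap R = Leib({\A})$, so that $\tilde{S}/Leib({\A}) \cong \mathfrak{s}$. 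Thus the entire non-abelian part of the radical has already been split off by the Lie-algebraic input.

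At this point the remaining task is purely an abelian-extension problem: split the extension $0 \to Leib({\A}) \to \tilde{S} \to \mathfrak{s} \to 0$ of Leibniz algebras, in which the kernel $Leib({\A})$ is abelian and $\mathfrak{s}$ is semisimple. Viewing $Leib({\A})$ as an $\mathfrak{s}$-module through the actions $T_a, S_a$, I would invoke the Leibniz analogue of Whitehead's second lemma, namely the vanishing of $HL^2(\mathfrak{s}, Leib({\A}))$ for semisimple $\mathfrak{s}$, to produce a Leibniz section $\mathfrak{s} \to \tilde{S}$ whose image $S$ is a subalgebra with $\tilde{S} = S \dotplus Leib({\A})$. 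If one prefers to avoid cohomology, the same splitting can be extracted directly: complete reducibility of $\mathfrak{s}$-modules, together with the strong constraint on the module structure coming from Lemma \ref{repn} (which forces $S_a = 0$ or $S_a = -T_a$ on irreducible pieces), lets one average a set-theoretic section against a Casimir-type operator to kill the defining cocycle.

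Finally I would assemble the pieces. From $S \cap Leib({\A}) = 0$ and $\tilde{S} \cap R = Leib({\A})$ one gets $S \cap R = 0$, while $S + R \supseteq \tilde{S} + R = {\A}$, so ${\A} = S \dotplus R$. Since $S \cong {\A}/R$ is semisimple, and $Leib(S) \subseteq S \cap Leib({\A}) \subseteq S \cap R = 0$, the complement $S$ is automatically a semisimple Lie algebra, as required. I expect the main obstacle to be precisely the splitting of the abelian extension over the semisimple $\mathfrak{s}$: this is the one step that does not follow formally from the classical Lie theory, since it requires upgrading Whitehead's lemma to the Leibniz setting and controlling both the left and right actions of $\mathfrak{s}$ on $Leib({\A})$ simultaneously.
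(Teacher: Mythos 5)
The paper itself gives no proof of this theorem; it cites Barnes \cite{barneslevi}, so the comparison must be with Barnes's argument. Your reduction coincides exactly with his: $Leib({\A})\subseteq R$, classical Levi applied to ${\A}/Leib({\A})$, and pullback to $\tilde{S}=\pi^{-1}(\mathfrak{s})$ with $\tilde{S}\cap R=Leib({\A})$ and $\tilde{S}+R={\A}$; all of that, and your final assembly, is correct. The divergence, and the weak point, is the splitting of $0\to Leib({\A})\to\tilde{S}\to\mathfrak{s}\to 0$. You outsource it to ``the Leibniz analogue of Whitehead's second lemma,'' i.e.\ vanishing of $HL^2(\mathfrak{s},Leib({\A}))$. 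That statement is true, but it is a genuinely hard theorem (established by Feldvoss and Wagemann only years after this paper and after Barnes), not something available in the paper's toolkit, and it is strictly stronger than what is needed; invoking it unproved leaves a real gap in a blind proof. Your fallback --- averaging a set-theoretic section against a Casimir operator, with Lemma \ref{repn} controlling the right action --- is only a sketch: Leibniz $2$-cocycles are not alternating, so the classical Lie-cohomology averaging does not transfer verbatim, and Lemma \ref{repn} is both unnecessary and misdirected here, since it concerns irreducible ${\A}$-modules while the relevant fact holds identically: the Leibniz identity with repeated first argument gives $[[a,a],b]=0$, so $Leib({\A})\subseteq Z^l({\A})$ and the right action $S_a$ vanishes on all of $Leib({\A})$ with no irreducibility hypothesis.

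The same observation closes the gap elementarily, and is essentially Barnes's trick: since $L_k=0$ for every $k\in Leib({\A})$, left multiplication induces a well-defined Lie-algebra action of $\mathfrak{s}=\tilde{S}/Leib({\A})$ on the whole vector space $\tilde{S}$ (the identity $L_{[a,b]}=L_aL_b-L_bL_a$ makes this a module map), and $Leib({\A})$ is a submodule. Weyl's complete reducibility then yields a complement $W$ with $[\tilde{S},W]\subseteq W$, whence $[W,W]\subseteq W$ automatically --- $W$ is a subalgebra, not merely a subspace, precisely because invariance is under left multiplication by all of $\tilde{S}$. Then $W\cap R\subseteq W\cap Leib({\A})=0$, $W+R={\A}$, and $Leib(W)\subseteq W\cap Leib({\A})=0$ shows $W$ is a semisimple Lie algebra. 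So where you correctly isolated the abelian-extension problem as ``the one step that does not follow formally from classical Lie theory,'' it in fact does follow from classical Lie theory (Weyl, not Whitehead), once one notices that the Leibniz kernel is left-central; no Leibniz cohomology is needed. As written, your proposal is sound in architecture but incomplete at its decisive step; with the substitution above it becomes a complete proof identical in spirit to the cited one.
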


\section{Nilpotency}

In this section we show the Leibniz algebra version of Engel's theorem and some of it's consequences.  We also introduce Engel subalgebras and use them to give a characterization of nilpotency and to show existence of Cartan subalgebras.

\vspace{5mm}
\begin{defn}
A Leibniz algebra $\A$  is nilpotent of class c if each product of c+1 elements is 0 and some product of c elements is not 0. An element is left normed if it is of the form  $[a_1,[a_2,[...[a_{n-1},a_n]...]]]$. It is convenient to write such an element as  $[a_1,a_2,...,a_n]$.
\end{defn}
\vspace{5mm}

\begin{prop}\label{normed} Any element in the Leibniz algebra $\A$ that is the product of $n$ elements can be expressed as a linear combination of the $n$ elements with each term being left normed.
\end{prop}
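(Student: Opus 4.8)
The plan is to induct on the number $n$ of factors and to reduce an arbitrary bracketing to left-normed form using only the Leibniz identity (\ref{leibnizid}). Any product of $n$ elements has the shape $[P,Q]$, where $P$ is a product of $j$ elements and $Q$ a product of $n-j$ elements with $1\le j\le n-1$. By the inductive hypothesis both $P$ and $Q$ are already linear combinations of left-normed products (of fewer than $n$ factors), so by bilinearity $[P,Q]$ is a linear combination of brackets $[P',Q']$ in which \emph{both} $P'$ and $Q'$ are left-normed. Hence the whole statement reduces to the following claim: if $P=[a_1,\dots,a_k]$ and $Q=[b_1,\dots,b_m]$ are left-normed, then $[P,Q]$ is a linear combination of left-normed products of the $k+m$ elements $a_1,\dots,a_k,b_1,\dots,b_m$.

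To prove this claim I would first dispose of the case $k=1$, where $[a_1,[b_1,\dots,b_m]]=[a_1,b_1,\dots,b_m]$ is already left-normed. For $k\ge 2$ write $P=[a_1,P']$ with $P'=[a_2,\dots,a_k]$ left-normed, and apply the Leibniz identity in the rearranged form
\[
[[a_1,P'],Q]=[a_1,[P',Q]]-[P',[a_1,Q]].
\]
The first term is harmless: $[P',Q]$ is a product of only $n-1=k+m-1$ elements, so by the (outer) inductive hypothesis it is a combination of left-normed products $W_i$; since prepending a single element to a left-normed product keeps it left-normed, $[a_1,[P',Q]]=\sum_i c_i[a_1,W_i]$ is already of the required form.

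The main obstacle is the second term $[P',[a_1,Q]]$: here $[a_1,Q]=[a_1,b_1,\dots,b_m]$ is again left-normed, so this is once more a bracket of two left-normed products involving the full set of $n$ elements, and the induction on $n$ does not decrease. The device that closes the argument is to run, for fixed $n$, a \emph{secondary} induction on the length $k$ of the left factor: in $[P',[a_1,Q]]$ the left factor $P'$ has length $k-1<k$, so the secondary hypothesis applies and rewrites it as a linear combination of left-normed products. Combining the two terms completes the inner induction, hence the claim, hence the proposition. I expect the only genuine subtlety to be recognizing that a single induction on $n$ is insufficient, and that the length of the left-hand factor must serve as a secondary induction parameter.
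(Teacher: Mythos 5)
Your proof is correct and is essentially the paper's own argument: the same decomposition of a product of $n$ elements as a bracket of two shorter products, the same Leibniz rearrangement $[[a,t],s]=[a,[t,s]]-[t,[a,s]]$, and the same treatment of the two resulting terms (the first handled by the induction on $n$, the second by descent on the length of the left factor). Your explicit secondary induction on the length of the left-hand factor is precisely what the paper phrases informally as ``repeat the process on $[t,[a,s]]$; after finitely many steps the first arguments will reduce to just one term,'' so you have simply made the paper's iteration rigorous.
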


\begin{proof} We use induction on $n$. If $n=3$, then by Leibniz identity $[[a,b],c]=[a,[b,c]]-[b,[a,c]]$. Assume the result for products with less than $n$ elements. Any product with $n$ elements is of the form $[r,s]$ where $r$ contains $i$ elements and $s$ contains $j=n-i$ elements. By induction, $r$ is a linear combination of left normed elements, each of which is of the form $[a,t]$ where $a$ is a single element and $t$ is left normed with $j-1$ elements. Then $[r,s] = [[a,t],s] = [a,[t,s]]-[t,[a,s]]$. In the first term, $[t,s]$ can be written in the desired form by induction and then so can $[a,[t,s]]$. In the second term $[a,s]$ can be written in the desired form by induction. Furthermore, in the second term  the first argument has one less term than $r$ and the second part $[a,s]$ has one more term than s. Repeat the process on $[t,[a,s]]$. If we continue this process, after finitely many steps the first arguments will reduce to just one term which completes the proof. 
\end{proof}

For a Leibniz algebra  $\A$ the series of ideals
\begin{center}
${\A}\supseteq {\A}^{1} \supseteq {\A}^{2} \supseteq \ldots$    where $ {\A}^{i+1}=[{\A}, {\A}^{i}]$
\end{center} 
( as defined before) is called the lower central series of $\A$.
As an immediate consequence of Proposition \ref{normed}, we have


\begin{cor}
A Leibniz algebra is nilpotent of class $c$ if A$^{c+1}=0$ but A$^{c}\neq 0$.
\end{cor}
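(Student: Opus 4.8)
The plan is to translate the two conditions in the definition of ``nilpotent of class $c$'' --- that every product of $c+1$ elements vanishes while some product of $c$ elements does not --- into the language of the lower central series, and to check that they say exactly $\A^{c+1}=0$ and $\A^{c}\neq 0$. The bridge between arbitrary bracketings and the ideals $\A^{k}$ is Proposition \ref{normed}.

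First I would prove, by induction on $k$, that $\A^{k}$ is precisely the span of the left-normed products $[a_1,a_2,\dots,a_k]$ of exactly $k$ elements of $\A$. The base case $\A^{2}=[\A,\A]$ is spanned by the $[a_1,a_2]$ by definition, and for the inductive step $\A^{k+1}=[\A,\A^{k}]$ is spanned by brackets $[a,w]$ with $a\in\A$ and $w$ a left-normed product of $k$ elements; such an $[a,w]$ is again left-normed of length $k+1$.

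Next I would combine this description with Proposition \ref{normed}. On one hand, each left-normed product of $k$ elements is itself a particular product of $k$ elements, so $\A^{k}\neq 0$ forces some product of $k$ elements to be nonzero. On the other hand, Proposition \ref{normed} tells us that every product of $k$ elements, regardless of how it is bracketed, is a linear combination of left-normed products of those same $k$ elements, and hence lies in $\A^{k}$. Consequently every product of $k$ elements vanishes if and only if $\A^{k}=0$, and some product of $k$ elements is nonzero if and only if $\A^{k}\neq 0$.

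Applying these two equivalences with $k=c+1$ and $k=c$ respectively converts the defining conditions for nilpotency of class $c$ into $\A^{c+1}=0$ and $\A^{c}\neq 0$, which is the corollary. I do not expect any serious obstacle: essentially all of the content is carried by Proposition \ref{normed}, and the only point demanding a little care is the nonvanishing clause, where one must argue in both directions --- a nonzero arbitrary product yields a nonzero left-normed summand, and conversely a nonzero left-normed product is a product --- rather than treating ``$\A^{c}\neq 0$'' and ``some product of $c$ elements is nonzero'' as tautologically identical.
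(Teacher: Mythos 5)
Your proposal is correct and follows exactly the route the paper intends: the paper offers no written proof, stating the corollary as an ``immediate consequence'' of Proposition \ref{normed}, and your argument---identifying $\A^{k}$ as the span of left-normed products of $k$ elements and then using Proposition \ref{normed} to place every $k$-fold product inside $\A^{k}$---is precisely the expansion of that remark. Your extra care with the nonvanishing clause (a nonzero left-normed product is itself a product, and conversely a nonzero product forces $\A^{k}\neq 0$) is a worthwhile detail the paper leaves implicit.
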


Thus if $\A$ is a nilpotent Leibniz algebra of class $c$ then  ${\A}^{c}\subseteq Z^r({\A})$. By Proposition \ref{normed}, we also have ${\A}^{c}\subseteq Z^l({\A})$. Hence  ${\A}^c \subseteq Z^l({\A}) \cap Z^r({\A}) = Z({\A})$ and $Z({\A}) \neq 0$. As in case of Lie algebras, the sum and intersection of two nilpotent ideals of a Leibniz algebra $\A$ are nilpotent. Hence $\A$ contains a unique maximal nilpotent ideal called the nilradical of $A$ and denoted by $nil({\A})$. Let $R = rad({\A})$ and $N = nil({\A})$.
Then as shown in (\cite{vgor}, Proposition 4), $[{\A} , R] \subset N$. So $[R , R]$ is nilpotent. In particular, if $\A$ is solvable, we have $[{\A} , {\A}]$ is nilpotent. In fact, we have the following result.

\begin{prop} \cite{vgor} The Leibniz algebra $\A$ is solvable if and only if $[{\A} , {\A}]$ is nilpotent.
\end{prop}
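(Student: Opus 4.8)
The plan is to prove the two implications separately, dispatching the easy ``if'' direction first and then leaning on the containment $[\A , R] \subseteq N$ recorded just above the statement for the harder ``only if'' direction.

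For the ``if'' direction, suppose $[\A , \A]$ is nilpotent. Since every nilpotent Leibniz algebra is solvable, there is an integer $k$ with $([\A,\A])^{(k)} = 0$. The point is then to relate the derived series of $\A$ to that of $[\A,\A]$: writing $B = [\A,\A] = \A^{(1)}$, an easy induction using the recursion $\A^{(i+1)} = [\A^{(i)}, \A^{(i)}]$ gives $\A^{(i+1)} = B^{(i)}$ for all $i \geq 0$. Hence $\A^{(k+1)} = B^{(k)} = 0$, so $\A$ is solvable.

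For the ``only if'' direction, suppose $\A$ is solvable. Then $\A$ is its own radical, that is $R = rad(\A) = \A$, because $rad(\A)$ is the unique maximal solvable ideal and $\A$ is itself a solvable ideal of $\A$. The inclusion $[\A , R] \subseteq N$ (with $N = nil(\A)$) noted above then reads $[\A , \A] \subseteq N$. Since $N$ is nilpotent and $[\A,\A]$ is an ideal of $\A$ lying inside $N$, hence a subalgebra of the nilpotent Leibniz algebra $N$, it follows that $[\A,\A]$ is itself nilpotent.

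The main obstacle is confined to the ``only if'' direction, and it has in effect already been absorbed into the cited fact: everything rests on the inclusion $[\A, rad(\A)] \subseteq nil(\A)$. Were that result not available to quote, I would have to establish it directly, presumably by passing to the semisimple quotient $\A / R$, showing that the elements of $[\A , R]$ act nilpotently, and invoking the Leibniz analogue of Engel's theorem to conclude $[\A , R] \subseteq nil(\A)$. Granting that inclusion, the remaining steps are routine: recognizing $rad(\A) = \A$ for solvable $\A$, and combining the fact that ideals of nilpotent Leibniz algebras are nilpotent with the derived-series identity $\A^{(i+1)} = ([\A,\A])^{(i)}$.
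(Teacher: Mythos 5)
Your proof is correct and takes essentially the same route as the paper, which establishes the ``only if'' direction in exactly your way in the paragraph preceding the statement --- combining the cited inclusion $[\A , rad(\A)] \subseteq nil(\A)$ from \cite{vgor} with the observation that $rad(\A) = \A$ for solvable $\A$ --- and refers to \cite{vgor} for the rest. Your converse, via the identity $\A^{(i+1)} = ([\A,\A])^{(i)}$ together with the fact that nilpotent Leibniz algebras are solvable, is the standard elementary argument and is sound.
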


Several authors have given various forms of Engel's theorem for Leibniz algebras. The Leibniz versions were proven without using the standard Lie algebra result in \cite{omirov1998} and \cite{patsourakos}, and using the standard result in \cite{barnesengel} and \cite{secondpaper}. The second method greatly shortens the proof. The short proof given here shows the result using Lie sets and follows that in \cite{secondpaper}. A Lie subset of a Leibniz algebra $\A$ is a subset that is closed under multiplication. 


\begin{thm} \label{Engel} Let $\A$ be a Leibniz algebra, $L$ be a Lie subset of $\A$ that spans $\A$ and $M$ be an $\A$-module with associated representation $(T,S)$. Suppose that $T_a$ is nilpotent for all $a \in L$.  Then $\A$ acts nilpotently on $M$ and there exists $0 \neq m \in M$  such that $[a,m]=[m,a]=0$ for all $a\in {\A}$.
\end{thm}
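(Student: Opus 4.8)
The plan is to reduce everything to the classical Engel--Jacobson theorem for the left operators and then to transport the conclusion to the right operators using the representation identities. The key observation is that the third representation relation $T_{[a,b]}=T_a\circ T_b-T_b\circ T_a$ says precisely that $a\mapsto T_a$ carries the bracket to the commutator bracket. Hence, because $L$ is closed under multiplication, the set $\{T_a\mid a\in L\}$ is closed under the commutator bracket, i.e. it is a Lie set of linear operators on $M$, and by hypothesis each of these operators is nilpotent. Since $L$ spans $\A$ and $T$ is linear, this Lie set spans $\{T_a\mid a\in\A\}$. I would then invoke the classical theorem of Engel and Jacobson: a Lie set (more generally, a weakly closed set) of nilpotent endomorphisms of a nonzero finite-dimensional space has a nonzero common null vector and can be simultaneously strictly upper triangularized. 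Note that the ordinary Engel theorem for Lie algebras does \emph{not} apply directly, since an arbitrary combination $\sum\alpha_iT_{a_i}$ need not be nilpotent; it is exactly the Lie-set refinement that is needed. In particular $M_0:=\{m\in M\mid T_am=0\text{ for all }a\in\A\}\neq 0$.

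Next I would extract the two identities that let the right operators be controlled by the left ones. From the second representation relation one gets $T_a\circ S_b-S_b\circ T_a=S_{[a,b]}$, and combining the first and second relations gives the crucial cancellation $S_b\circ S_a=-S_b\circ T_a$. Set $K:=\{m\in M\mid S_bm=0\text{ for all }b\in\A\}$. First, $K$ is invariant under every $T_a$: for $m\in K$ one computes $S_b(T_am)=T_a(S_bm)-S_{[a,b]}m=0$, so $T_am\in K$. Second, $K\neq 0$: for $m\in M_0$ and all $a,b$ we have $S_b(S_am)=-S_b(T_am)=0$, so $S_aM_0\subseteq K$; were $K=0$ this would force $M_0\subseteq\bigcap_a\ker S_a=K=0$, contradicting $M_0\neq 0$. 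Now I apply Engel--Jacobson a second time, to the Lie set $\{T_a|_K\mid a\in L\}$ of nilpotent operators on the nonzero space $K$: there is $0\neq m\in K$ with $T_am=0$ for all $a\in\A$. Since $m\in K$ we also have $S_am=0$, so $[a,m]=[m,a]=0$ for all $a\in\A$, which is the desired vector.

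Finally, to obtain that $\A$ acts nilpotently on $M$, I would induct on $\dim M$. The line $\ff m$ spanned by the common null vector just found is a submodule on which $\A$ acts trivially, and the quotient $M/\ff m$ again satisfies the hypotheses, since the induced left operators $\overline{T_a}$ remain nilpotent for $a\in L$. By induction $\A$ acts nilpotently on $M/\ff m$, yielding a flag along which both $\overline{T_a}$ and $\overline{S_a}$ strictly decrease; lifting this flag and adjoining $0\subset\ff m$ produces a chain $0=M_0\subset M_1\subset\cdots\subset M_k=M$ with $T_aM_i\subseteq M_{i-1}$ and $S_aM_i\subseteq M_{i-1}$ for all $a\in\A$, so that any product of $\dim M$ of the operators $T_a,S_a$ annihilates $M$. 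I expect the main obstacle to be pinning down and correctly applying the Lie-set form of Engel's theorem: the whole argument rests on the fact that nilpotency of the \emph{generators} $T_a$ ($a\in L$), rather than of the entire span, already forces simultaneous triangularizability, and the two identities above are precisely what allow this left-handed conclusion to be promoted to a statement about the full two-sided action.
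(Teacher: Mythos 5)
Your proof is correct, but it takes a genuinely different route from the paper's. Both arguments hinge on the same external tool --- Jacobson's refinement of Engel's theorem for Lie (weakly closed) sets of nilpotent operators, applied to $T(L)$ --- and you are right that this refinement, not plain Engel, is what the hypothesis ``$T_a$ nilpotent only for $a\in L$'' forces one to use. The paper, however, handles the right action by reducing to an \emph{irreducible} submodule and invoking Lemma \ref{repn} (for irreducible $M$, either $S_a=0$ for all $a$ or $S_a=-T_a$ for all $a$), which instantly converts the common $T$-null vector into a common two-sided null vector spanning a one-dimensional submodule; its induction then runs on $M/N$ with $N$ irreducible. You avoid irreducibility and that lemma altogether: from the representation identities you correctly extract $S_b\circ(S_a+T_a)=0$, show $K=\bigcap_b\ker S_b$ is a nonzero $T$-invariant subspace (your verifications --- $T$-invariance via $S_bT_a=T_aS_b-S_{[a,b]}$, and $K\neq 0$ via $S_aM_0\subseteq K$ --- both check out), and apply Engel--Jacobson a second time inside $K$; your dimension induction then passes through the trivial one-dimensional submodule $\ff m$ rather than through an irreducible one. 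What your route buys is self-containedness: it needs nothing about the structure of irreducible Leibniz bimodules, and your identity $S_bS_a=-S_bT_a$ is in effect the computational kernel underlying Lemma \ref{repn} (it says the right action annihilates the image of $S$), here used at the module level just as the paper uses $R_cR_b=-R_cL_b$ at the algebra level. What the paper's route buys is brevity: granting the already-stated lemma, a single application of Jacobson's refinement suffices and the two-sided vanishing is immediate. Your final flag argument, with both $T_a$ and $S_a$ mapping $M_i$ into $M_{i-1}$, is a correct rendering of ``$\A$ acts nilpotently on $M$,'' so the argument is complete.
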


\begin{proof} If $M$ is irreducible, then by Lemma \ref{repn} either $[M,A] = 0$ or $[m,a] = -[a,m]$ for all $a \in {\A},  m \in M$. $T({\A})$ is the homomorphic image of $\A$ and is a Lie algebra with Lie set $T(L)$ which spans $T({\A})$ and each element of $T(L)$ acts nilpotently on $M$. Hence there is a common eigenvector $m$, associated with the eigenvalue $0$ for all $a \in {\A}$ by Jacobson's refinement to Engel's theorem (for Lie algebras). Since the right multiplication by all $a \in {\A}$ also has $[m,a] = 0$ by Lemma \ref{repn}, $M$ is the one dimensional space spanned by $m$. For general $M$, let $N$ be an irreducible submodule. Then $[A,N] = [N,A] = 0$. By induction $\A$ acts nilpotently on $M/N$ and hence on $M$.
\end{proof}

The following extensions of classical Engel's theorem results are seen to be corollaries of the above theorem.


\begin{cor}
 Let $\A$ be a Leibniz algebra with Lie set $L$ that spans $\A$. Suppose that $L_a$ is nilpotent for all $a \in L$. Then $\A$ is nilpotent.
 \end{cor}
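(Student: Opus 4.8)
The plan is to derive this corollary as a direct application of Theorem \ref{Engel} to the adjoint module. First I would take $M = \A$ itself, regarded as a module over $\A$ under the left and right multiplications $[a,m]$ and $[m,a]$ furnished by the algebra product. With this choice the three module identities in the definition of an $\A$-module are exactly three instances of the Leibniz identity (\ref{leibnizid}), so $\A$ is genuinely a module over itself. For this adjoint module the associated representation $(T,S)$ is given by $T_a = L_a$ and $S_a = R_a$, since $T_a(m) = [a,m]$ and $S_a(m) = [m,a]$ for all $m \in \A$.

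Under this identification the hypothesis that $L_a$ is nilpotent for every $a$ in the spanning Lie set $L$ says precisely that $T_a$ is nilpotent for all $a \in L$. Theorem \ref{Engel} then applies verbatim and yields that $\A$ acts nilpotently on $M = \A$. I would read this conclusion as the statement that the associative subalgebra of $\mathrm{End}(\A)$ generated by the operators $T_a = L_a$ and $S_a = R_a$ is nilpotent; in particular there is an integer $n$ with $L_{a_1}L_{a_2}\cdots L_{a_n} = 0$ for all $a_1,\ldots,a_n \in \A$.

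It then remains only to translate this operator-nilpotence into vanishing of the lower central series. For arbitrary $a_1,\ldots,a_{n+1} \in \A$ the left-normed product $[a_1,[a_2,[\ldots,[a_n,a_{n+1}]\ldots]]]$ is exactly $L_{a_1}L_{a_2}\cdots L_{a_n}(a_{n+1})$, which vanishes by the previous step. By Proposition \ref{normed} every product of $n+1$ elements is a linear combination of such left-normed products, so every product of $n+1$ elements is zero; equivalently $\A^{n+1}=0$, which is the definition of $\A$ being nilpotent.

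The only step requiring care, and the one I would flag as the main obstacle, is the bookkeeping that connects Theorem \ref{Engel} to the corollary: namely verifying that the adjoint construction really is a module with $T_a = L_a$, and reading "$\A$ acts nilpotently on $M$" as the simultaneous nilpotence of all sufficiently long compositions of the $L_a$. Once these two identifications are made, the conclusion follows immediately from Theorem \ref{Engel} together with Proposition \ref{normed}, with no further computation needed.
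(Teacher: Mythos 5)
Your proof is correct and is exactly the derivation the paper intends: the paper states this corollary without proof as a direct consequence of Theorem \ref{Engel}, and specializing to the adjoint module $M=\A$ (where the module axioms are instances of the Leibniz identity and $(T,S)=(L,R)$ by the identities (\ref{LRidentity})) is the expected route. Your translation of ``acts nilpotently'' into vanishing of long compositions of left multiplications, and then into $\A^{n+1}=0$ via Proposition \ref{normed}, is also the standard bookkeeping and is carried out correctly.
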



\begin{cor}
 Let $\A$ be a Leibniz algebra with module $M$. Suppose that $T_a$ acts nilpotently on M for all $a \in {\A}$. Then there is a flag of submodules in which $\A$ annihilates each factor. Hence $\A$ acts nilpotently on $M$.
 \end{cor}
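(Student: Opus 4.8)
The plan is to reduce everything to Theorem \ref{Engel} by taking the spanning Lie subset to be $\A$ itself, and then to build the desired flag by induction on $\dim M$. The first observation is that $\A$ is trivially a Lie subset of $\A$ that spans $\A$, since $\A$ is closed under its own multiplication. The hypothesis gives that $T_a$ is nilpotent for every $a \in \A$, in particular for every $a$ in this Lie subset, so Theorem \ref{Engel} applies directly and yields at once that $\A$ acts nilpotently on $M$ (the final ``hence'' assertion) together with a nonzero $m \in M$ satisfying $[a,m]=[m,a]=0$ for all $a \in \A$.

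Next I would use this annihilated vector to start the flag. Set $M_1 = {\rm span}\{m\}$. Because $[a,m]=[m,a]=0$ for all $a \in \A$, $M_1$ is a one-dimensional $\A$-submodule of $M$ on which $\A$ acts as zero; equivalently $\A$ annihilates the factor $M_1/M_0$ where $M_0 = 0$. I would then pass to the quotient module $M/M_1$, which is again an $\A$-module of strictly smaller dimension. The key point to check is that $T_a$ still acts nilpotently on $M/M_1$: this is immediate, since the operator induced by $T_a$ on a quotient is a quotient of the nilpotent operator $T_a$ on $M$ and hence nilpotent.

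Now I would invoke the inductive hypothesis on $M/M_1$ to obtain a flag of $\A$-submodules $0 = \overline{M_1} \subseteq \overline{M_2} \subseteq \cdots \subseteq \overline{M_k} = M/M_1$ in which $\A$ annihilates every factor. Under the one-to-one correspondence between submodules of $M/M_1$ and submodules of $M$ containing $M_1$, these pull back to $M_1 \subseteq M_2 \subseteq \cdots \subseteq M_k = M$, and prepending $M_0 = 0$ gives the flag $0 = M_0 \subseteq M_1 \subseteq \cdots \subseteq M_k = M$. Each factor $M_i/M_{i-1}$ with $i \geq 2$ is isomorphic as an $\A$-module to $\overline{M_i}/\overline{M_{i-1}}$ and is therefore annihilated by $\A$, while $M_1/M_0 = M_1$ is annihilated by construction; so $\A$ annihilates every factor of the flag. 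Finally, since $[\A, M_i] \subseteq M_{i-1}$ and $[M_i, \A] \subseteq M_{i-1}$ for each $i$, any sufficiently long string of left or right multiplications by elements of $\A$ sends $M$ into $M_0 = 0$, recovering once more that $\A$ acts nilpotently on $M$.

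I do not expect a serious obstacle here: the content is carried entirely by Theorem \ref{Engel}, and what remains is the standard Engel-style induction on dimension (valid since $M$ is finite dimensional). The only points requiring care are the verification that nilpotency of $T_a$ descends to the quotient and the correct handling of both the left and right module actions when asserting that each factor is annihilated by $\A$.
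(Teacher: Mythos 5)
Your proposal is correct and matches the paper's intent: the paper presents this as an immediate corollary of Theorem \ref{Engel} (whose own proof already contains the same induction, finding an annihilated irreducible/one-dimensional submodule and passing to the quotient), and your argument simply makes that routine flag-building induction explicit, including the two points that genuinely need checking --- that nilpotency of $T_a$ descends to quotients and that the annihilated vector from Theorem \ref{Engel} is killed by both the left and right actions.
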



\begin{cor}\label{Engel2}
 Let $\A$ be a Leibniz algebra in which each left multiplication operator is nilpotent. Then $\A$ is nilpotent.
 \end{cor}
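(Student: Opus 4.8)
The plan is to obtain this statement as the special case of the first corollary above --- the one asserting that if a Lie subset $L$ spans $\A$ and $L_a$ is nilpotent for all $a \in L$, then $\A$ is nilpotent --- in which the spanning Lie subset is taken to be the whole algebra. Indeed, the reason that earlier corollary was phrased in terms of an \emph{arbitrary} spanning Lie set is precisely so that the classical Engel statement falls out as an immediate consequence, so I do not expect to need any new machinery beyond Theorem \ref{Engel} and its first corollary.

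Concretely, I would first record that $\A$ is itself a Lie subset of $\A$. By the definition given just before Theorem \ref{Engel}, a Lie subset is a subset closed under multiplication, and since $[\A,\A] \subseteq \A$ the whole algebra trivially satisfies this closure condition. Of course $\A$ spans $\A$. The hypothesis of the present corollary is that $L_a$ is nilpotent for \emph{every} $a \in \A$, which in particular supplies nilpotency of $L_a$ for all $a$ in the Lie set $L = \A$. Hence every hypothesis of the earlier corollary is met with this choice of $L$, and applying it yields the nilpotency of $\A$ at once.

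There is essentially no obstacle to overcome: all of the genuine content sits in Theorem \ref{Engel} and its first corollary, and the present statement is recovered merely by specializing the spanning Lie set to all of $\A$. The single point worth verifying --- and it is immediate --- is that the whole algebra does qualify as a Lie subset in the sense defined before the theorem, so that the earlier corollary genuinely applies and the classical Engel conclusion follows.
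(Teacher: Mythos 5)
Your proposal is correct and follows exactly the route the paper intends: Corollary \ref{Engel2} is the specialization of the first corollary of Theorem \ref{Engel} to the Lie subset $L = \A$, which trivially is closed under multiplication and spans $\A$. The paper offers no separate argument beyond noting these statements are ``seen to be corollaries of the above theorem,'' and your verification that the whole algebra qualifies as a spanning Lie subset is precisely the one-line check needed.
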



Several other corollaries also hold. Jacobson \cite{jacob} has shown that Lie algebras that admit certain operators are nilpotent. The Leibniz algebra extensions are shown using the same methods and hence the proofs are omitted here.


\begin{cor}
 Let $\A$ be a Leibniz algebra that admits an automorphism of prime period with no non-zero fixed points. Then $\A$ is nilpotent.
 \end{cor}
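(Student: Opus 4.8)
The plan is to reduce to the Lie-set form of Engel's Theorem (Theorem \ref{Engel}) by producing a spanning Lie subset of $\A$ on which every left multiplication operator is nilpotent. Let $\phi$ be the given automorphism, of prime period $p$ and with no nonzero fixed points. Since $\ff$ is algebraically closed of characteristic zero it contains a primitive $p$-th root of unity $\omega$, and because $\phi^p = \mathrm{id}$ the minimal polynomial of $\phi$ divides $x^p - 1$, which has distinct roots; hence $\phi$ is diagonalizable and $\A$ decomposes into eigenspaces
\[
\A = \A_0 \oplus \A_1 \oplus \cdots \oplus \A_{p-1}, \qquad \A_i = \{a \in \A \mid \phi(a) = \omega^i a\}.
\]
The fixed-point-free hypothesis says exactly that $\A_0 = 0$.

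First I would record that this is a $\zz/p\zz$-grading. Since $\phi$ is an automorphism, $\phi([a,b]) = [\phi(a),\phi(b)]$, so for $a \in \A_i$ and $b \in \A_j$ we get $\phi([a,b]) = \omega^{i+j}[a,b]$ and therefore $[\A_i, \A_j] \subseteq \A_{i+j}$, with indices read modulo $p$. Consequently the set $L = \bigcup_{i=0}^{p-1} \A_i$ of all homogeneous elements is closed under the bracket, i.e.\ it is a Lie subset of $\A$, and it spans $\A$ because $\A$ is the direct sum of the $\A_i$.

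Next I would check that $L_a$ is nilpotent for every $a \in L$. A nonzero homogeneous element lies in some $\A_i$ with $i \neq 0$ (as $\A_0 = 0$), and from the grading $L_a(\A_j) = [\A_i,\A_j] \subseteq \A_{i+j}$, so $L_a^k(\A_j) \subseteq \A_{j+ki}$. Because $p$ is prime and $i \not\equiv 0$, the residue $j + ki$ runs through all of $\zz/p\zz$ as $k$ varies, so for each $j$ there is some $k \le p-1$ with $j + ki \equiv 0$, whence $L_a^k(\A_j) \subseteq \A_0 = 0$; thus $L_a^{\,p-1} = 0$ on all of $\A$. With a spanning Lie subset on which every left multiplication is nilpotent in hand, the corollary to Theorem \ref{Engel} asserting nilpotency of $\A$ (the spanning-Lie-set version of Engel's Theorem) finishes the argument.

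The point to watch --- and the reason the Lie-set form of Engel's Theorem is needed rather than Corollary \ref{Engel2} --- is that nilpotency of $L_a$ has only been established for homogeneous $a$: for a general element $a = \sum_i a_i$ the operator $L_a = \sum_i L_{a_i}$ is a sum of nilpotent operators moving between different graded pieces, and such a sum need not be nilpotent. It is exactly the observation that the homogeneous elements already form a spanning Lie subset that lets Theorem \ref{Engel} bridge this gap, so that the result follows in the Leibniz setting by the same grading mechanism that underlies the Lie-algebra theorem of Jacobson.
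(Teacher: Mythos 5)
Your proof is correct, and it is exactly the argument the paper has in mind: the paper omits the proof, saying only that it follows Jacobson's method for Lie algebras, and your eigenspace decomposition for the order-$p$ automorphism, the observation that the homogeneous elements form a spanning Lie subset with each $L_a$ nilpotent (using primality of $p$ and $\A_0=0$), and the appeal to the Lie-set corollary of Theorem \ref{Engel} is precisely that method transplanted to the Leibniz setting. Your closing remark correctly identifies why the Lie-set version rather than Corollary \ref{Engel2} is the right tool here.
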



\begin{cor}
Let $\A$ be a Leibniz algebra over $\ff$ that admits a non-singular derivation. Then $\A$ is nilpotent.
\end{cor}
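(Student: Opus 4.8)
The plan is to deduce nilpotency from the Lie-set version of Engel's theorem (the corollary immediately following Theorem \ref{Engel}) by exhibiting a spanning Lie subset of $\A$ on which every left multiplication operator is nilpotent. Let $D$ be the given non-singular derivation of $\A$. Since $\ff$ is algebraically closed, I would first decompose $\A$ into the generalized eigenspaces of $D$, writing $\A = \bigoplus_{\lambda} \A_\lambda$, where $\A_\lambda = \{x \in \A : (D-\lambda)^k x = 0 \text{ for some } k \geq 1\}$ and $\lambda$ ranges over the finitely many eigenvalues of $D$.

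The key structural fact I would establish is that this decomposition grades the multiplication, that is, $[\A_\lambda, \A_\mu] \subseteq \A_{\lambda+\mu}$ for all $\lambda,\mu$. This follows from the derivation identity $D[a,b]=[Da,b]+[a,Db]$ by a binomial expansion: one checks that $(D-\lambda-\mu)^N[a,b] = \sum_{k} \binom{N}{k}\,[(D-\lambda)^k a,\,(D-\mu)^{N-k} b]$, and for $a \in \A_\lambda$, $b \in \A_\mu$ and $N$ large every summand vanishes. Crucially, because $D$ is non-singular, $0$ is not an eigenvalue and hence $\A_0 = 0$; thus every $\lambda$ occurring in the decomposition is nonzero.

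Now let $L = \bigcup_\lambda \A_\lambda$ be the set of homogeneous elements. The grading property shows that $L$ is closed under the bracket, so $L$ is a Lie subset of $\A$, and it spans $\A$ since $\A = \bigoplus_\lambda \A_\lambda$. For a homogeneous element $a \in \A_\lambda$ the operator $L_a$ sends $\A_\mu$ into $\A_{\mu+\lambda}$, so $L_a^k$ sends $\A_\mu$ into $\A_{\mu+k\lambda}$. Since $\lambda \neq 0$ and there are only finitely many eigenvalues, $\mu + k\lambda$ eventually leaves the set of eigenvalues and $\A_{\mu+k\lambda} = 0$; taking $k$ uniformly large over the finitely many $\mu$ shows $L_a^k = 0$. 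Thus $L_a$ is nilpotent for every $a \in L$, and the Lie-set version of Engel's theorem yields that $\A$ is nilpotent.

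The one point that requires care is the reason for passing to the Lie subset of homogeneous elements rather than trying to show every $L_a$ is nilpotent and invoking Corollary \ref{Engel2} directly: a general $a$ is a sum of homogeneous components lying in distinct eigenspaces, and a sum of nilpotent operators need not be nilpotent, so that direct route is unavailable. The Lie-set formulation of Engel's theorem is precisely what circumvents this obstacle. The remaining technical verification is the grading identity $[\A_\lambda,\A_\mu]\subseteq\A_{\lambda+\mu}$, which is routine but load-bearing, since non-singularity enters the argument exactly through the resulting vanishing $\A_0=0$.
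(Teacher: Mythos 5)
Your proof is correct and is exactly the argument the paper has in mind: the paper omits the proof, remarking only that the Leibniz version follows ``using the same methods'' as Jacobson's classical Lie-algebra result, and those methods are precisely your route --- the generalized eigenspace decomposition of the non-singular derivation $D$, the grading $[\A_\lambda,\A_\mu]\subseteq\A_{\lambda+\mu}$, the vanishing $\A_0=0$ forcing each $L_a$ with $a$ homogeneous to be nilpotent (valid here since the field has characteristic zero, so $\mu+k\lambda$ escapes the finite eigenvalue set), and the corollary of Theorem \ref{Engel} applied to the spanning Lie subset of homogeneous elements. Your closing remark also correctly identifies why Corollary \ref{Engel2} cannot be invoked directly and why the Lie-set formulation of Engel's theorem is the right tool --- which is indeed the reason the paper develops that formulation.
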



Jacobson asked if there is a converse to the last corollary for Lie algebras. Dixmier and Lister \cite{dix} constructed an example to show the converse does not hold directly. Allowing a weaker form of operator than derivation, Moens (see \cite{moens}) developed a converse. A similar process has been obtained for Leibniz algebras in \cite{charac}. This proceeds as follows. A linear transformation, $D$,  on $\A$ is called a Leibniz derivation of order $s$ if $D([x_1,...,x_s]) = \sum_{j=1}^s ([x_1,...,D(x_j),...,x_s])$.
This concept supplies a converse.


\begin{thm} \cite{charac}  Every nilpotent Leibniz algebra $\A$ of class $c$ admits an invertible Leibniz derivation $D$ of order $\lfloor c/2 \rfloor +1$.
\end{thm}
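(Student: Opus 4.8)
The plan is to build $D$ out of the lower central series $\A = \A^{1} \supseteq \A^{2} \supseteq \cdots \supseteq \A^{c} \supseteq \A^{c+1} = 0$. First I would fix a vector space decomposition $\A = V_{1} \oplus V_{2} \oplus \cdots \oplus V_{c}$ with $\A^{k} = V_{k} \oplus V_{k+1} \oplus \cdots \oplus V_{c}$, so that $V_{k}$ is a complement of $\A^{k+1}$ in $\A^{k}$; note that Proposition \ref{normed} gives $[\A^{i}, \A^{j}] \subseteq \A^{i+j}$, so a product of $s$ elements drawn from $V_{k_{1}}, \ldots, V_{k_{s}}$ lands in $\A^{K}$ with $K = k_{1} + \cdots + k_{s} \geq s$. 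The first approximation to $D$ is the \emph{weight operator} $D_{0}$ defined by $D_{0}|_{V_{k}} = k \cdot \mathrm{id}_{V_{k}}$. Since the eigenvalues $1, 2, \ldots, c$ are nonzero (this is where characteristic $0$ enters), $D_{0}$ is invertible; more generally, any operator that preserves the filtration and induces multiplication by $k$ on each quotient $\A^{k}/\A^{k+1}$ is invertible, a fact I will reuse for the corrected $D$.

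By multilinearity it suffices to verify the order-$s$ identity on homogeneous inputs $x_{i} \in V_{k_{i}}$. For these, $\sum_{j} [x_{1}, \ldots, D_{0} x_{j}, \ldots, x_{s}] = \big(\sum_{j} k_{j}\big)\,[x_{1}, \ldots, x_{s}] = K\,[x_{1}, \ldots, x_{s}]$ exactly, whereas $D_{0}([x_{1}, \ldots, x_{s}])$ agrees with $K\,[x_{1}, \ldots, x_{s}]$ modulo $\A^{K+1}$, precisely because the additivity $k_{1} + \cdots + k_{s} = K$ forces the leading (degree-$K$) terms to match. Thus $D_{0}$ satisfies the identity to leading order, and the defect $E(x_{1}, \ldots, x_{s}) := D_{0}([x_{1}, \ldots, x_{s}]) - \sum_{j}[x_{1}, \ldots, D_{0}x_{j}, \ldots, x_{s}]$ lies in $\A^{K+1} \subseteq \A^{s+1}$. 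It vanishes exactly when $\A$ is graded by its lower central series, but not in general, which is why $D_{0}$ alone does not suffice.

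The heart of the argument, and where $s = \lfloor c/2 \rfloor + 1$ is forced, is the correction step. I would look for $D = D_{0} + R_{1} + R_{2} + \cdots$, where each $R_{t}$ raises filtration degree by exactly $t$ (that is, $R_{t}(V_{k}) \subseteq V_{k+t}$), determined recursively so as to annihilate the defect one filtration layer at a time. Each $R_{t}$ preserves the filtration and induces $0$ on $\mathrm{gr}(\A)$, so $D$ still induces multiplication by $k$ on $\A^{k}/\A^{k+1}$ and remains invertible, and the sum is finite since $\A^{c+1} = 0$. The crucial numerical input is that $s = \lfloor c/2 \rfloor + 1$ is exactly the smallest integer with $2s > c$: because $K \geq s$, any correction $R_{t}$ with $t \geq \lceil c/2 \rceil$ satisfies $[x_{1}, \ldots, R_{t}x_{j}, \ldots, x_{s}] \in \A^{K+t}$ with $K + t \geq s + \lceil c/2 \rceil = c+1$, hence this bracket is $0$. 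Thus high corrections are invisible to the right-hand side of the identity and enter only through $D$ applied to the whole product, which makes the top layers of the defect removable with no bracket obstruction.

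The main obstacle I anticipate is the consistency of the recursion in the low layers $t < \lceil c/2 \rceil$, where the terms $[x_{1}, \ldots, R_{t} x_{j}, \ldots, x_{s}]$ do \emph{not} vanish and the equation for $R_{t}$ becomes a genuine cohomological equation in the graded Leibniz algebra $\mathrm{gr}(\A)$. Here I would aim to show that the obstruction to solving for $R_{t}$ is again killed by the inequality $2s > c$, which forces the bracket contributions that could obstruct solvability into degree $> c$, where they vanish; this is the delicate step and the technical core of \cite{charac}. It is also exactly the step that explains why order $2$, i.e. an ordinary invertible derivation, is not enough: characteristically nilpotent Lie algebras, such as the Dixmier--Lister example cited above, admit only nilpotent, hence non-invertible, derivations, so one genuinely needs the added flexibility available at order $\lfloor c/2 \rfloor + 1$. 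Once the recursion terminates I would record that the resulting $D$ acts correctly on the top term $\A^{c} \subseteq Z(\A)$ as well, completing the verification that $D$ is an invertible Leibniz derivation of order $\lfloor c/2 \rfloor + 1$.
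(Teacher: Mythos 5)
Your proposal has a genuine gap, and it sits exactly where you flagged it: the solvability of the recursion for the low corrections $R_t$ with $t<\lceil c/2\rceil$ is the entire content of the theorem, and you offer only the hope that the inequality $2s>c$ kills the obstructions. That hope is false, and the example you yourself invoke refutes it. The Dixmier--Lister algebra \cite{dix} is nilpotent of class $c=3$, for which your target order is $\lfloor 3/2\rfloor+1=2$; a Leibniz derivation of order $2$ is an ordinary derivation, so the statement at $c=3$ would hand this characteristically nilpotent Lie algebra an invertible derivation, which it famously does not have. Concretely, at $c=3$ the correction $R_1$ has $t=1<\lceil c/2\rceil=2$, the brackets $[R_1x_1,x_2]$ land in ${\A}^3={\A}^c\neq 0$ rather than in degree $>c$, and for Dixmier--Lister the resulting cohomological equation has no solution (otherwise the grading derivation of ${\rm gr}({\A})$ would lift). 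The correct numerical condition is not $2s>c$ but $2s-1>c$: one needs every product of $s$ elements in which one factor lies in ${\A}^s$ to vanish, i.e.\ ${\A}^{2s-1}=0$. With $s=\lfloor c/2\rfloor+1$ this holds exactly when $c$ is even; for odd $c$ one has $2s-1=c$ and these products survive. Accordingly the theorem should be read, as it is proved in \cite{charac} following Moens \cite{moens}, in terms of the nilindex $l$ (the minimal $l$ with ${\A}^l=0$, so $l=c+1$): the order is $\lfloor l/2\rfloor+1$, which agrees with $\lfloor c/2\rfloor+1$ for even $c$ but is $\lfloor c/2\rfloor+2$ for odd $c$.

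You should also know that the actual proof is a one-step construction with no graded leading-term analysis, no ${\rm gr}({\A})$, and no corrective recursion at all, so your deformation-theoretic route is not only incomplete but far heavier than needed. Take $s$ with $2s-1\geq l$, choose any complement $W$ with ${\A}=W\oplus{\A}^{s}$, and define $D|_{W}={\rm id}$ and $D|_{{\A}^{s}}=s\cdot{\rm id}$; then $D$ is invertible with eigenvalues $1$ and $s$. By multilinearity it suffices to check block-homogeneous tuples. If all $x_i\in W$, then $[x_1,\ldots,x_s]\in{\A}^s$, so the left side is $s[x_1,\ldots,x_s]$, and the right side is $\sum_{i=1}^{s}[x_1,\ldots,x_i,\ldots,x_s]=s[x_1,\ldots,x_s]$ as well. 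If some $x_{i}\in{\A}^{s}$, then since $D$ preserves both blocks, every term on either side is a scalar multiple of a product of $s$ elements with a factor in ${\A}^{s}$, hence lies in ${\A}^{s+(s-1)}={\A}^{2s-1}=0$ by the filtration property $[{\A}^{i},{\A}^{j}]\subseteq{\A}^{i+j}$ (which follows by induction from the Leibniz identity, in the spirit of Proposition \ref{normed}). Note this uses ${\A}^{2s-1}=0$, one step stronger than the abelianness of ${\A}^{s}$ that $2s>c$ gives you; that one-step difference is precisely the parity failure above. If you want to salvage your write-up, replace the recursion by this two-block construction and state the order as $\lfloor l/2\rfloor+1$ in the nilindex (equivalently, keep $\lfloor c/2\rfloor+1$ for even class and $\lfloor c/2\rfloor+2$ for odd class).
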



Now we arrive at the following characterization of nilpotency.


\begin{thm} \cite{charac} A Leibniz algebra over $\ff$ is nilpotent if and only if it has a non-singular Leibniz derivation.
\end{thm}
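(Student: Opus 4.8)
The forward implication is immediate: if $\A$ is nilpotent of class $c$, the preceding theorem already supplies an invertible Leibniz derivation of order $\lfloor c/2\rfloor+1$, which is in particular non-singular. Hence all the content lies in the converse, and that is where I would concentrate.

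For the converse, suppose $D$ is a non-singular Leibniz derivation of some order $s$. Since $\ff$ is algebraically closed of characteristic zero, the plan is to pass to the generalized eigenspace (root space) decomposition $\A=\bigoplus_{\alpha}{\A}_\alpha$ of $D$, where ${\A}_\alpha=\{x\in\A : (D-\alpha)^N x=0 \text{ for } N\gg 0\}$. Because $D$ is invertible, $0$ is not an eigenvalue, so every occurring $\alpha$ is non-zero and the index set $\Lambda$ is finite. The first lemma to establish is the weight-additivity of the $s$-fold bracket, namely $[{\A}_{\alpha_1},\dots,{\A}_{\alpha_s}]\subseteq{\A}_{\alpha_1+\cdots+\alpha_s}$, with the convention that ${\A}_\beta=0$ when $\beta\notin\Lambda$. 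For genuine eigenvectors this is immediate from the defining identity $D([x_1,\dots,x_s])=\sum_{j}[x_1,\dots,Dx_j,\dots,x_s]$; for generalized eigenvectors it follows from the same identity by a routine multinomial computation, since $(D-\sum_j\alpha_j)$ acts on the product by distributing $(D-\alpha_j)$ across the factors, and a large enough power therefore annihilates every term.

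The key step is then to deduce that the left multiplication $L_x$ is nilpotent for every homogeneous $x\in{\A}_\alpha$. Here I would use that $L_x^{\,s-1}(y)=[x,\dots,x,y]$ (with $s-1$ copies of $x$) is a single $s$-fold bracket, so by the lemma $L_x^{\,s-1}$ carries ${\A}_\mu$ into ${\A}_{\mu+(s-1)\alpha}$; composing, $L_x^{\,k(s-1)}$ carries ${\A}_\mu$ into ${\A}_{\mu+k(s-1)\alpha}$. Since $\alpha\neq 0$ and $\Lambda$ is finite, $\mu+k(s-1)\alpha\notin\Lambda$ once $k$ is large, uniformly in $\mu$, so $L_x^{\,k(s-1)}=0$ and $L_x$ is nilpotent. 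This is precisely where non-singularity of $D$ enters, through $\alpha\neq 0$.

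The hard part will be the final passage from nilpotency of the homogeneous left multiplications to nilpotency of $\A$ itself. The homogeneous elements span $\A$ and each has nilpotent $L_x$, but --- and this is the essential obstacle once $s>2$ --- they need not form a Lie subset, because $[{\A}_\alpha,{\A}_\beta]$ lies in a single weight space only for $s$-fold products, not for the binary bracket. Thus one cannot simply invoke the Lie-set form of Engel's theorem (the first corollary to Theorem~\ref{Engel}), nor Corollary~\ref{Engel2}; either of these would finish the proof at once if a spanning Lie subset consisting of elements with nilpotent left multiplication, or alternatively the nilpotency of every $L_x$, were already available. I would attempt to close this gap by transferring the problem to the Lie algebra $L(\A)$: one checks that $\operatorname{ad}_{L(\A)}(L_x)$ is nilpotent for homogeneous $x$, since $\operatorname{ad}(L_x)^{k}(L_y)=L_{L_x^{k}(y)}$ and $L_x$ is nilpotent, and then argues by a careful induction along a $D$-stable filtration of $\A$, reducing arbitrary products to left-normed ones via Proposition~\ref{normed} and applying Theorem~\ref{Engel} to the successive quotients. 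It is worth emphasizing that this difficulty is genuine: for ordinary derivations ($s=2$) the homogeneous elements do form a Lie subset and the argument terminates immediately, yet the Dixmier--Lister example \cite{dix} shows that non-singular ordinary derivations fail to characterize nilpotency --- which is exactly why order $s>2$ must be permitted, and why this last step cannot be circumvented.
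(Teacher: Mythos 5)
Note first that the survey contains no proof of this theorem: it is quoted verbatim from \cite{charac}, so your proposal must be measured against the argument of Moens \cite{moens} and its Leibniz extension in \cite{charac}. Your forward direction and the first two steps of your converse are correct and agree with that argument: the multinomial computation does yield $[{\A}_{\alpha_1},\dots,{\A}_{\alpha_s}]\subseteq {\A}_{\alpha_1+\cdots+\alpha_s}$ for the $s$-fold \emph{left-normed} bracket, and since $L_x^{s-1}(y)=[x,\dots,x,y]$ is such a bracket, $L_x$ is indeed nilpotent for every homogeneous $x$, with invertibility of $D$ entering exactly through $\alpha\neq 0$. One correction to your closing remark: the Dixmier--Lister example \cite{dix} defeats the \emph{forward} direction for $s=2$ (a nilpotent Lie algebra all of whose derivations are singular); the converse for $s=2$ is Jacobson's theorem \cite{jacob} and is true. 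It is the forward direction that forces one to allow $s>2$, so your sentence has the two roles reversed, even though your conclusion (that the converse must cope with $s>2$) stands.

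The gap you flag at the end is genuine, and the repair you sketch cannot succeed as stated. After your second step the only data in hand is a spanning set of elements whose left multiplications are nilpotent, and such data is compatible with badly non-nilpotent algebras: in $s\ell(2,\mathbb{C})$ the ad-nilpotent elements $e$, $f$, $h+e-f$ span the whole algebra. Passing to $L({\A})$ changes nothing (for that example $L({\A})\cong s\ell(2,\mathbb{C})$), so no Engel-type theorem, and no induction along a $D$-stable filtration, can extract nilpotency of $\A$ from this local information alone; a global structural input is indispensable. The proofs in \cite{moens, charac} supply it in two stages that your sketch omits entirely. First, solvability: one checks that the radical is invariant, so that $D$ induces an invertible Leibniz derivation on the semisimple quotient ${\A}/rad({\A})$; the key lemma of \cite{moens} is that a Leibniz derivation of a semisimple Lie algebra is an ordinary derivation, hence inner, hence singular (since $\mathrm{ad}\,x$ annihilates $x$), which forces the Levi factor in Theorem \ref{levi} to vanish. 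Second, with $\A$ solvable, Corollary \ref{uptri} puts all the operators $L_x$ in simultaneous upper triangular form; the diagonal entries are then \emph{linear} functionals of $x$, so their vanishing on your spanning set of homogeneous elements forces them to vanish identically, whence every $L_x$ is nilpotent and Corollary \ref{Engel2} concludes. This is the decisive point your outline is missing: the set $\{x\in{\A} : L_x \ \text{nilpotent}\}$ is not a subspace in general, but once solvability is established it is cut out by linear conditions, and only then does spanning by homogeneous elements suffice.
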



Engel subalgebras have been introduced by Barnes to show the existence of Cartan subalgebras in Lie algebras (see \cite{1967}) and Leibniz algebras \cite{firstpaper}. They are useful in other contexts also. Let  $\A$ be a Leibniz algebra and $a \in {\A}$. The Fitting null component of the left multiplication operator $L_a$ on $\A$ is called the Engel subalgebra for $a$ and is denoted by $E_{\A}(a)$. It is indeed a subalgebra of $\A$. Unlike in Lie algebras, it is possible that the element  $a$ is not in $E_{\A}(a)$.
%
%
\begin{ex} Let $\A$ be the two dimensional cyclic Leibniz algebra generated by an element $a$ with non-zero product $[a,a^2] = a^2$. Then the Engel subalgebra $E_{\A}(a) = {\rm span} \{a-a^2\}$ and  $a \not\in E_{\A}(a)$.
\end{ex}
%
%
However, we do have the following result.
%
%
\begin{lemma} \cite{firstpaper}. Let $\A$ be a Leibniz algebra. For any $a \in \A$, there exists $b \in E_{\A}(a)$ such that $E_{\A}(a) = E_{\A}(b)$.
\end{lemma}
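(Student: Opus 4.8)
The plan is to exhibit an explicit $b$ of the form $b=\pi(a)$, where $\pi$ is the Fitting projection onto $H:=E_{\A}(a)=\A_0(L_a)$, and then to show that this particular choice actually forces $L_b=L_a$, so that the two Fitting null components coincide. Write $\A=H\oplus W$ for the Fitting decomposition of $L_a$, with $L_a$ nilpotent on $H$ and invertible on $W=\A_1(L_a)$. The essential obstacle, and the reason the statement is not immediate, is that unlike the Lie case (where $[a,a]=0$ forces $a\in E_{\A}(a)$) here $a$ need not lie in $H$, as the preceding example shows; so one cannot simply take $b=a$. Instead one must manufacture an element inside $H$ whose left multiplication operator reproduces the Fitting behaviour of $L_a$.

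First I would record that $H\neq 0$ whenever $\A\neq 0$. The cyclic subalgebra $\langle a\rangle=\mathrm{span}\{a,a^2,\dots\}$ is $L_a$-invariant, since $L_a(a^k)=a^{k+1}$, and the coefficient of $a$ in $L_a(a^k)=a^{k+1}$ vanishes for every $k$ (for the top power this is exactly the relation $\alpha_1=0$ established in Example~\ref{cyclic}). Hence the matrix of $L_a|_{\langle a\rangle}$ has a zero row, $L_a|_{\langle a\rangle}$ is singular, and $\ker L_a\cap\langle a\rangle\subseteq H$ is nonzero. Consequently $0$ is an eigenvalue of $L_a$, so $t$ divides the minimal polynomial of $L_a$, which I write as $t^k g(t)$ with $k\geq 1$ and $g(0)\neq 0$.

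The key step is that the projection $\pi$ of $\A$ onto $H$ along $W$ is a polynomial in $L_a$. By coprimality of $t^k$ and $g(t)$ there are $u,v$ with $u(t)t^k+v(t)g(t)=1$, and then $\pi=q(L_a)$ for $q:=vg$; evaluating the Bézout relation at $t=0$ and using $k\geq 1$ gives $q(0)=v(0)g(0)=1$. Now set $b:=\pi(a)=q(L_a)(a)$, which lies in $H=E_{\A}(a)$ by construction. Using $L_a^{\,j}(a)=a^{j+1}$, I expand $b=\sum_j q_j\,a^{j+1}=a+\sum_{j\geq 1}q_j\,a^{j+1}$, the coefficient of $a$ being $q_0=q(0)=1$.

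Finally I would invoke the identity $L_{a^n}=0$ for $n>1$ recorded earlier: applying $L$ to this expansion annihilates every term $a^{j+1}$ with $j\geq 1$, leaving $L_b=q(0)L_a=L_a$. Since $L_b=L_a$ as operators on $\A$, their Fitting null components agree, that is $E_{\A}(b)=\A_0(L_b)=\A_0(L_a)=E_{\A}(a)$, with $b\in E_{\A}(a)$ as required. I expect the only genuinely delicate point to be the bookkeeping that $\pi=q(L_a)$ has nonzero constant term $q(0)=1$ (equivalently, that the nilpotent block really is present, i.e. $H\neq 0$); once that is secured, the vanishing $L_{a^n}=0$ does all the work and collapses $L_b$ onto $L_a$ exactly.
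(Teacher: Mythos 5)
Your proof is correct: the Fitting projection onto $E_{\A}(a)$ is indeed a polynomial $q(L_a)$ with $q(0)=1$ (your singularity argument for $L_a$ on the cyclic subalgebra $\langle a\rangle$, via the $\alpha_1=0$ computation of Example 2.3, is exactly what guarantees $k\geq 1$), so $b=q(L_a)(a)=a+\sum_{j\geq 1}q_j a^{j+1}$ lies in $E_{\A}(a)$ and $L_b=L_a$ follows from $L_{a^n}=0$ for $n>1$. The paper states this lemma without proof, citing Barnes, and your argument is essentially Barnes's original one --- decompose $a$ along the Fitting decomposition of $L_a$, observe that the component outside $E_{\A}(a)$ lies in ${\rm span}\{a^n \mid n\geq 2\}$ and is therefore annihilated by left multiplication, and conclude $L_b=L_a$ --- merely rendered more explicitly through the B\'ezout description of the projection.
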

\begin{lemma}\label{subalg} \cite{firstpaper} Let $\A$ be a Leibniz algebra, $M$ be a subalgebra and $a \in \A$. Suppose $E_{\A}(a) \subseteq M$. Then $M=N_{\A}^r(M)$.
\end{lemma}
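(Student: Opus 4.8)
The plan is to reduce to the situation in which the distinguished element actually lies in $M$, and then to run the Fitting-decomposition argument that is standard in the Lie case. The example immediately preceding this lemma shows that in general $a \notin E_{\A}(a)$, so I cannot assume $a \in M$ at the outset; this is precisely the obstacle that the previous lemma is tailored to remove. That lemma produces $b \in E_{\A}(a)$ with $E_{\A}(b) = E_{\A}(a)$. Since $E_{\A}(a) \subseteq M$, we get $b \in M$ together with $E_{\A}(b) = E_{\A}(a) \subseteq M$, so after replacing $a$ by $b$ I may assume from now on that $a \in E_{\A}(a) \subseteq M$; in particular $a \in M$.

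With $a \in M$ secured, two facts fall out at once. First, because $M$ is a subalgebra containing $a$, we have $L_a(M) = [a, M] \subseteq [M, M] \subseteq M$, so $M$ is invariant under $L_a$. Second, if $y \in N_{\A}^r(M)$ then by definition $[m, y] \in M$ for every $m \in M$; taking the admissible choice $m = a$ gives $L_a(y) = [a, y] \in M$. Hence $L_a$ carries the whole of $N_{\A}^r(M)$ into $M$. Note also that $M \subseteq N_{\A}^r(M)$, since for $x \in M$ and $m \in M$ we have $[m, x] \in [M, M] \subseteq M$.

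Next I would invoke the Fitting decomposition of the operator $L_a$ on the finite-dimensional space ${\A}$, writing ${\A} = E_{\A}(a) \oplus {\A}_{1}$ where $E_{\A}(a)$ is the Fitting null component and $L_a$ restricts to an invertible operator on the Fitting one component ${\A}_{1}$. Since $E_{\A}(a) \subseteq M$ and $M$ is $L_a$-invariant, the modular law yields $M = E_{\A}(a) \oplus (M \cap {\A}_{1})$ with $M \cap {\A}_{1}$ again $L_a$-invariant, whence $L_a$ induces an \emph{invertible} operator $\overline{L_a}$ on the quotient ${\A}/M \cong {\A}_{1}/(M \cap {\A}_{1})$. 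On the other hand, $L_a$ maps $N_{\A}^r(M)$ into $M$, so the well-defined operator $\overline{L_a}$ annihilates the subspace $N_{\A}^r(M)/M$ of ${\A}/M$. An invertible operator has trivial kernel, forcing $N_{\A}^r(M)/M = 0$, that is, $N_{\A}^r(M) = M$.

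The one genuinely non-formal step is the opening reduction. The right-normalizer condition only controls $[m, y]$ for $m \in M$, so to convert it into the usable statement $L_a(y) \in M$ I must know that $a$ itself lies in $M$ — and the example shows this can fail for the given $a$. This is exactly where the left/right asymmetry of Leibniz algebras is felt, and where the previous lemma (swapping $a$ for a suitable $b \in E_{\A}(a)$ with the same Engel subalgebra) is indispensable. Once $a \in M$ is in place, the remainder is the routine Fitting/kernel argument and should present no further difficulty.
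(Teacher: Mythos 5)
Your proof is correct. Note that the paper itself gives no proof of this lemma --- it is quoted from Barnes \cite{firstpaper} --- and your argument is essentially Barnes's original one: the reduction to $a \in E_{\A}(a) \subseteq M$ via the preceding lemma is exactly the key step there (and you rightly identify it as the non-formal one), followed by the Fitting decomposition of $L_a$. The only cosmetic difference is that Barnes applies the Fitting decomposition directly to $N = N^r_{\A}(M)$, which is $L_a$-invariant because $L_a(N) \subseteq M \subseteq N$: writing $N = N_0 \oplus N_1$ with $N_0 \subseteq E_{\A}(a) \subseteq M$ and $N_1 = L_a(N_1) \subseteq L_a(N) \subseteq M$ gives $N \subseteq M$ at once, avoiding your detour through the quotient ${\A}/M$ and the modular law; both versions are valid and rest on the same ideas.
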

%
%
We will next give a characterization of nilpotency for Leibniz algebras. Analogs of these results occur in both group theory and Lie algebras. Let $\A$ be a Leibniz algebra. We say that $\A$ satisfy the normalizer condition if every proper subalgebra of $\A$ is properly contained in it's normalizer. Let $\A$ be nilpotent with proper subalgebra $H$. Let $s$ be the smallest positive integer such that ${\A}^s$ is contained in $H$. Then ${\A}^{s-1}$ is contained in the normalizer of $H$ using Proposition \ref{normed}. Hence nilpotent Leibniz algebras satisfy the normalizer condition. Likewise $\A$ is said to satisfy the right normalizer condition if every proper subalgebra of $\A$ is properly contained in it's right normalizer. If $\A$ satisfies the normalizer condition, then it clearly satisfies the right normalizer condition. Suppose that $\A$ satisfies the right normalizer condition and let $H$ be a proper subalgebra of $\A$. Let $ a \in {\A}$. By Lemma \ref{subalg},  $E_{\A}(a)= N_{\A}^r(E_{\A}(a))$. Hence by the right normalizer property we have $E_{\A}(a)=A$ for all $a\in {\A}$ which implies that the left multiplication operators $\{L_a \mid a \in {\A}\}$ are nilpotent by definition of Engel subalgebras. Therefore $\A$ is nilpotent by Corollary \ref{Engel2}.  Hence the normalizer condition characterizes nilpotency in Leibniz algebras, as does the right normalizer condition. If $\A$ is nilpotent and $H$ is a maximal subalgebra of $\A$, the normalizer condition yields that $H$ is an ideal in $\A$. Clearly if all maximal subalgebras are ideals, then they are right ideals. Finally suppose that all maximal subalgebras of $\A$ are right ideals of $\A$. If $a \in {\A}$ and $E_{\A}(a) \neq {\A}$, then there is a maximal subalgebra, $M$, which contains $E_{\A}(a)$. Again by Lemma \ref{subalg}, $M$ is it's own right normalizer and hence not a right ideal which is, a contradiction. Therefore, we have  $E_{\A}(a)=A$ for all $a \in {\A}$ and $\A$ is nilpotent by Corollary \ref{Engel2}. Thus we have the following theorem parts of which are given in \cite{firstpaper}.
%
%
\begin{thm} Let $\A$ be a Leibniz algebra. Then the following are equivalent:

\begin{enumerate}

\item  $\A$ is nilpotent.

\item  $\A$ satisfies the normalizer condition.

\item $\A$ satisfies the right normalizer condition.

\item Every maximal subalgebra of $\A$ is an ideal of $\A$.

\item Every maximal subalgebra of $\A$ is a right ideal of $\A$.
\end{enumerate}
\end{thm}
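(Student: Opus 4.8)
The plan is to prove the theorem by establishing a cycle of implications linking the five conditions, taking advantage of the fact that much of the hard analytic work has already been done in the discussion immediately preceding the statement. In fact, reading that paragraph, the proof is essentially already laid out as running text; my task is to organize it into a clean cyclic chain. I would prove $(1) \Rightarrow (2) \Rightarrow (3)$ and $(3) \Rightarrow (1)$ to close one loop, then separately handle $(1) \Rightarrow (4) \Rightarrow (5) \Rightarrow (1)$, or more economically weave $(4)$ and $(5)$ into the same argument. The key external inputs I may freely assume are Proposition \ref{normed} (on left-normed products), Lemma \ref{subalg} (giving $E_{\A}(a) = N^r_{\A}(E_{\A}(a))$ when $E_{\A}(a) \subseteq M$), and Corollary \ref{Engel2} (if every left multiplication operator is nilpotent, then $\A$ is nilpotent).

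First I would show $(1) \Rightarrow (2)$: assuming $\A$ nilpotent with proper subalgebra $H$, let $s$ be least with ${\A}^s \subseteq H$; then using Proposition \ref{normed} one checks ${\A}^{s-1} \subseteq N_{\A}(H)$ while ${\A}^{s-1} \not\subseteq H$, so $H$ is properly contained in its normalizer. The implication $(2) \Rightarrow (3)$ is immediate from the definitions, since $N_{\A}(H) = N^l_{\A}(H) \cap N^r_{\A}(H) \subseteq N^r_{\A}(H)$, so proper containment in the normalizer forces proper containment in the right normalizer. For $(3) \Rightarrow (1)$, I would take any $a \in \A$ and apply Lemma \ref{subalg} with $M = E_{\A}(a)$ to get $E_{\A}(a) = N^r_{\A}(E_{\A}(a))$; the right normalizer condition then forbids $E_{\A}(a)$ from being proper, forcing $E_{\A}(a) = \A$ for every $a$, which by the definition of the Engel subalgebra means each $L_a$ is nilpotent, whence $\A$ is nilpotent by Corollary \ref{Engel2}.

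To incorporate the maximal-subalgebra conditions, I would argue $(1) \Rightarrow (4)$: if $\A$ is nilpotent and $H$ is maximal, then by $(2)$ the normalizer $N_{\A}(H)$ properly contains $H$, so by maximality $N_{\A}(H) = \A$, i.e. $H \lhd \A$. The step $(4) \Rightarrow (5)$ is trivial since an ideal is in particular a right ideal. Finally, for $(5) \Rightarrow (1)$ I would take $a \in \A$ and suppose for contradiction $E_{\A}(a) \neq \A$; then $E_{\A}(a)$ lies in some maximal subalgebra $M$, and Lemma \ref{subalg} gives $M = N^r_{\A}(M)$, so $M$ is its own right normalizer and cannot be a proper right ideal, contradicting $(5)$. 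Hence $E_{\A}(a) = \A$ for all $a$, every $L_a$ is nilpotent, and Corollary \ref{Engel2} yields nilpotency.

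The main obstacle I anticipate is not conceptual but book-keeping: making sure the two uses of Lemma \ref{subalg} are correctly applied (the hypothesis $E_{\A}(a) \subseteq M$ must genuinely hold in each case — trivially when $M = E_{\A}(a)$, and by choice of the maximal subalgebra in the $(5) \Rightarrow (1)$ step) and verifying the subtle point in $(1) \Rightarrow (2)$ that $\A^{s-1}$ normalizes $H$, which relies on expressing products of $\A^{s-1}$ with $H$ as left-normed elements landing in $\A^s \subseteq H$ via Proposition \ref{normed}. One should also note the mild redundancy that a proper $E_{\A}(a)$ being its own right normalizer contradicts the right normalizer condition directly, so one could in principle collapse $(3) \Rightarrow (1)$ and $(5) \Rightarrow (1)$ into a single lemma; I would keep them separate for clarity but flag the shared mechanism.
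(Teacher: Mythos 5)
Your proposal is correct and takes essentially the same route as the paper, whose proof is exactly the running text preceding the theorem: $(1)\Rightarrow(2)$ via Proposition \ref{normed} applied to the lower central series, $(2)\Rightarrow(3)$ trivially, $(3)\Rightarrow(1)$ and $(5)\Rightarrow(1)$ via Lemma \ref{subalg} ($E_{\A}(a)=N^r_{\A}(E_{\A}(a))$, resp.\ $M=N^r_{\A}(M)$ for a maximal $M\supseteq E_{\A}(a)$) together with Corollary \ref{Engel2}, and $(1)\Rightarrow(4)\Rightarrow(5)$ from the normalizer condition plus maximality. Your reorganization into two explicit implication cycles and your remark on the shared Engel-subalgebra mechanism are purely cosmetic refinements of the paper's argument.
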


Engel subalgebras are useful in showing the existence of Cartan subalgebras. A subalgebra $H$ of a Leibniz algebra $\A$ is called a Cartan subalgebra if $H$ is nilpotent and is $N_{\A}(H) = H$. Barnes \cite{firstpaper} gave the following realization of a Cartan subalgebra for $\A$ which proves its existence.

\begin{thm}\label{cartan} \cite{firstpaper}
A subalgebra of a Leibniz algebra $\A$ is a Cartan subalgebra if it is minimal in the set of all Engel subalgebras of $\A$.
\end{thm}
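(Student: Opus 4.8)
The plan is to take a subalgebra $H$ that is minimal in the set of all Engel subalgebras, write $H=E_{\A}(a)$, and verify the two defining conditions of a Cartan subalgebra: $N_{\A}(H)=H$ and $H$ nilpotent. First I would use the preceding lemma to replace $a$ by an element $b\in E_{\A}(a)$ with $E_{\A}(b)=E_{\A}(a)$, so that without loss of generality $a\in H$. The self-normalizing property is then immediate from Lemma \ref{subalg}: applying it with $M=H$ (the hypothesis $E_{\A}(a)\subseteq M$ holds with equality) gives $H=N^r_{\A}(H)$, and since $N_{\A}(H)=N^l_{\A}(H)\cap N^r_{\A}(H)\subseteq N^r_{\A}(H)=H$ while always $H\subseteq N_{\A}(H)$, we obtain $N_{\A}(H)=H$.

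The substance is the nilpotency of $H$, for which I would invoke Engel's theorem in the form of Corollary \ref{Engel2}: it suffices to show that $L_h|_H$ is nilpotent for every $h\in H$. Consider the Fitting decomposition $\A=H\oplus W$ with respect to $L_a$, where $H=E_{\A}(a)$ is the Fitting null component and $W$ is the component on which $L_a$ acts invertibly. Since $L_a$ is a derivation its weights add, so $[H,H]\subseteq H$ and $[H,W]\subseteq W$; hence for every $h\in H$ the operator $L_h$ preserves both $H$ and $W$. I then look at the set $U=\{h\in H:\ L_h|_W\text{ is invertible}\}$, a Zariski-open subset of $H$ that is nonempty because it contains $a$. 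For $h\in U$ the Fitting null component $E_{\A}(h)$ of $L_h$ meets $W$ trivially and, since $L_h$ respects $\A=H\oplus W$, is contained in $H$; being an Engel subalgebra inside the minimal one $H$, minimality forces $E_{\A}(h)=H$, which is exactly the assertion that $L_h|_H$ is nilpotent.

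To finish I would drop the restriction to $U$ by a density argument. The coefficients of the characteristic polynomial of $L_h|_H$ depend polynomially on $h\in H$, so the locus $Z=\{h\in H:\ L_h|_H\text{ is nilpotent}\}$, cut out by the vanishing of all these coefficients except the leading one, is Zariski-closed. We have shown $U\subseteq Z$, and since $\ff$ is algebraically closed the vector space $H$ is irreducible as a variety, so the nonempty open set $U$ is dense and $Z=H$. Thus $L_h|_H$ is nilpotent for every $h\in H$, and Corollary \ref{Engel2} applied to the Leibniz algebra $H$ yields that $H$ is nilpotent; together with $N_{\A}(H)=H$ this shows $H$ is a Cartan subalgebra. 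I expect the main obstacle to be the middle step, namely establishing that $L_h$ respects the Fitting decomposition of $L_a$ (so that $E_{\A}(h)\subseteq H$ on all of $U$) and that nilpotency of $L_h|_H$ is a closed condition propagating from the dense set $U$ to all of $H$; once these structural points are secured, the appeals to minimality of $H$ and to Engel's theorem are routine.
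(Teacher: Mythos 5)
Your proof is correct, and it is worth noting that the paper itself offers no proof of Theorem \ref{cartan} --- it is quoted from Barnes \cite{firstpaper} --- so the natural comparison is with Barnes's argument. Your treatment of self-normalization (Lemma \ref{subalg} applied with $M=H$, plus $N_{\A}(H)\subseteq N^r_{\A}(H)$) and your use of the preceding lemma to arrange $a\in H=E_{\A}(a)$ are exactly as in \cite{firstpaper}; all the structural points you flagged as possible obstacles do go through: $L_a$ is a derivation by the Leibniz identity (the third equation of (\ref{LRidentity})), so generalized eigenspaces multiply, giving $[H,H]\subseteq H$ and $[H,W]\subseteq W$, whence $E_{\A}(h)=E_H(h)\oplus E_W(h)\subseteq H$ for $h$ in your open set $U$, and minimality then forces $E_{\A}(h)=H$, i.e.\ $L_h|_H$ nilpotent. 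Where you genuinely diverge from Barnes is the final propagation step: Barnes works along the single line $a+\lambda(b-a)$ for a fixed $b\in H$, observing that $\det\bigl(L_{a+\lambda(b-a)}|_W\bigr)$ is a polynomial in $\lambda$ not vanishing at $\lambda=0$, so the non-leading coefficients of the characteristic polynomial of $L_{a+\lambda(b-a)}|_H$ vanish for all but finitely many $\lambda$ and hence identically, in particular at $\lambda=1$; you instead run the same vanishing argument globally, cutting out the Zariski-closed nilpotency locus $Z\subseteq H$ and using irreducibility of the affine space $H$ to conclude $Z=H$ from $U\subseteq Z$. The two are logically equivalent here, but Barnes's one-parameter version buys validity over any sufficiently large (in particular any infinite) field without invoking algebraic closure or Zariski topology, while yours is arguably cleaner and is fully justified under the paper's standing hypothesis that $\ff$ is algebraically closed of characteristic zero; after that, the appeal to Corollary \ref{Engel2} to get nilpotency of $H$ is common to both.
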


For Leibniz algebras, the left and right normalizers of a Cartan subalgebra behave differently. The right normalizer of a Cartan subalgebra is equal to the Cartan subalgebra using Lemma \ref{subalg} and the fact that by Theorem \ref{cartan} a Cartan subalgebra is an Engel subalgebra. However, the left normalizer may not equal to the Cartan subalgebra as is seen in the following example.

\begin{ex}
Let $\A$ be the two dimensional cyclic Leibniz algebra generated by an element $a$ with non-zero product $[a,a^2] = a^2$. Then $C=E_{\A}(a)={\rm span}\{a-a^2\}$ is nilpotent and self-normalizing, hence $C$ is a Cartan subalgebra of $\A$. However, the left normalizer $N^l_{\A}(C) = {\A}$ and the right normalizer $N^r_{\A}(C) = C \neq {\A}$.
\end{ex}

\section{Semisimplicity}
In this section we define simple and semisimple Leibniz algebras and discuss some of their important properties. In particular we define the notion of Killing form and show that it is nondegenerate if the Leibniz algebra $\A$ is semisimple, but the converse is not true. It is important to note that in the literature some authors have used different definitions for simple and semisimple Leibniz algebras.
\begin{defn} 
A Leibniz algebra $\A$ is simple if ${\A}^2\neq Leib(A)$ and $\{0\}, Leib({\A}),$  ${\A}$  are the only ideals of $\A$.
\end{defn}

\begin{defn} A Leibniz algebra $\A$ is said to be semisimple if $rad({\A}) = Leib({\A})$. 
\end{defn}
Thus the Leibniz algebra $\A$ is semisimple if and only if the Lie algebra ${\A}/Leib({\A})$ is semisimple. However, if ${\A}/Leib({\A})$ is a simple Lie algebra then $\A$ is not necessarily a simple Leibniz algebra. Also ${\A}/Leib({\A})$ is a semisimple Lie algebra does not imply that $\A$ can be written as direct sum of simple Leibniz ideals as shown in the following example.

\begin{ex}
Consider the simple Lie algebra $sl(2, \mathbb{C})$  and its irreducible module $V(m) = {\rm span}\{v_0, v_1, \cdots v_m\}, m\geq 1$. The actions of $sl(2, \mathbb{C})$ on $V(m)$ is well known (for example see \cite{H}). Consider the algebra ${\A} = sl(2, \mathbb{C}) \dotplus V(m)$ with the left multiplication of any vector in $sl(2, \mathbb{C})$ with a vector in $V(m)$ given by the module action and the right multiplication being trivial. Then as shown in \cite{sl2} $\A$ is a Leibniz algebra and its only nontrivial proper ideal is $Leib({\A}) = V(m)$. Hence $\A$ is a simple Leibniz algebra. 

Now consider the $sl(2, \mathbb{C})$ - module $V = V(m) \oplus V(n)$ where $V(m)$ and $V(n)$ are irreducible $sl(2, \mathbb{C})$-modules and $m, n \geq 1$. Consider the algebra ${\widehat \A} = sl(2, \mathbb{C}) \dotplus V$ with the left multiplication of any vector in $sl(2, \mathbb{C})$ with a vector in $V$ given by the module action and the right multiplication being trivial. Then ${\widehat \A}$ is a Leibniz algebra (see \cite{sl2}). Then $V= Leib({\widehat \A})$, $V(m)$, and $V(n)$ are ideals of ${\widehat \A}$. Hence ${\widehat \A}$ not a simple Leibniz algebra although ${\widehat \A}/Leib({\widehat \A})$ is a simple Lie algebra. Furthermore, observe that ${\widehat \A}$ can not be written as direct sum of simple Leibniz ideals.
\end{ex}

\begin{thm}\label{directsum}
Suppose $\A$ is a semisimple Leibniz algebra over $\ff$. Then 
$$A=(S_1\oplus S_2\oplus \cdots \oplus S_k)\dotplus Leib(A),$$
where $S_j$ is a simple Lie algebra for all $1 \leq j \leq k$.
\end{thm}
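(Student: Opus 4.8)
The plan is to deduce this almost directly from the Levi decomposition (Theorem~\ref{levi}) together with the classical decomposition of a semisimple Lie algebra into simple ideals. First I would unwind the definition of semisimplicity: since $\A$ is semisimple we have $rad(\A) = Leib(\A)$, so the solvable radical $R$ appearing in Levi's theorem is exactly $Leib(\A)$.

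Next I would apply Theorem~\ref{levi} to write $\A = S \dotplus R = S \dotplus Leib(\A)$, where $S$ is a subalgebra of $\A$ that is a semisimple Lie algebra. Here the decomposition is a vector space direct sum in which $S$ is a (Lie) subalgebra complementary to the ideal $Leib(\A)$; this is precisely why the outer sum is written with $\dotplus$ rather than $\oplus$.

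Finally, because $S$ is an honest semisimple Lie algebra, I would invoke the standard structure theorem for semisimple Lie algebras to write $S = S_1 \oplus S_2 \oplus \cdots \oplus S_k$ as a direct sum of its simple ideals $S_j$, each of which is then a simple Lie algebra. Substituting this into the Levi decomposition yields $\A = (S_1 \oplus \cdots \oplus S_k) \dotplus Leib(\A)$, as desired.

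The main point to be careful about, rather than a genuine obstacle, is the bookkeeping between the two kinds of sum: the $S_j$ are genuine ideals inside the Lie algebra $S$, which justifies writing $\oplus$, whereas $S$ itself is only a complementary subalgebra of $\A$ and not an ideal, so the outer sum must remain $\dotplus$. All of the substantive work is packaged inside Levi's theorem, which we are quoting, so conditional on that result the statement follows essentially immediately.
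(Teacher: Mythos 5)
Your proposal is correct and follows exactly the same route as the paper: apply Levi's theorem (Theorem~\ref{levi}) with $R = rad(\A) = Leib(\A)$ by the definition of semisimplicity, then decompose the semisimple Levi subalgebra $S$ into its simple ideals $S_1 \oplus \cdots \oplus S_k$. Your extra remark distinguishing the inner $\oplus$ (ideals of $S$) from the outer $\dotplus$ (mere complementary subalgebra of $\A$) is accurate and a welcome clarification, but substantively the argument matches the paper's.
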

\begin{proof}
By Theorem \ref{levi}, ${\A}=S\dotplus R$, where $S$ is a semisimple Lie algebra and $R = rad({\A}) = Leib({\A})$. Since $S$ is
a semisimple Lie algebra we have $S = S_1\oplus S_2\oplus \cdots \oplus S_k$ where each $S_j$ is a simple ideal of $S$.

\end{proof}

The following is immediate from the above theorem.

\begin{cor} If $\A$ is a semisimple Leibniz algebra then $[{\A} , {\A}] = {\A}$.
\end{cor}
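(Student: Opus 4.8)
The plan is to deduce this corollary directly from Theorem \ref{directsum}, with essentially no extra machinery. First I would invoke that theorem to write $\A = (S_1 \oplus \cdots \oplus S_k) \dotplus Leib(\A)$ with each $S_j$ a simple Lie algebra, and set $S = S_1 \oplus \cdots \oplus S_k$. Since $[\A,\A] \subseteq \A$ is automatic, the whole task reduces to proving the reverse containment, and for that it suffices to show that each of the two summands $S$ and $Leib(\A)$ lies in $[\A,\A]$.

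For the first summand I would use the standard fact that a simple Lie algebra is perfect, i.e. $[S_j, S_j] = S_j$ for every $j$. Because the $S_j$ are Lie ideals and the decomposition is a direct sum, this gives $[S,S] = S$, so that $S = [S,S] \subseteq [\A,\A]$. For the second summand I would appeal directly to the definition recalled earlier in the paper, $Leib(\A) = {\rm span}\{[a,a] \mid a \in \A\}$: each generator $[a,a]$ is a product of two elements of $\A$ and hence lies in $[\A,\A]$, and since $[\A,\A]$ is a subspace we conclude $Leib(\A) \subseteq [\A,\A]$. Combining the two containments yields $\A = S + Leib(\A) \subseteq [\A,\A]$, and therefore $[\A,\A] = \A$.

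I do not expect a genuine obstacle here; the statement is a formal consequence of the structure theorem. The only ingredients beyond Theorem \ref{directsum} are that simple Lie algebras are perfect and that $Leib(\A)$ is by definition spanned by squares $[a,a]$. The one point worth stating explicitly, rather than a difficulty, is the observation that $Leib(\A)$ is automatically contained in the derived subalgebra, which is what lets the radical part of the Levi decomposition be absorbed into $[\A,\A]$; everything else is a short containment chase.
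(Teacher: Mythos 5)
Your proof is correct and takes essentially the same route as the paper, which states the corollary as immediate from Theorem \ref{directsum}: the two containments you make explicit (each simple summand $S_j$ is perfect, so $S \subseteq [{\A},{\A}]$, and $Leib({\A})$ is spanned by squares $[a,a]$, hence also lies in $[{\A},{\A}]$) are precisely the details the paper leaves unwritten. No gaps.
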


For a Leibniz algebra $\A$, we define $\kappa ( \, , \, ) : {\A} \times {\A} \longrightarrow {\A}$ by $\kappa (a , b) = tr(L_aL_b)$ for all $a , b \in \A$.
Then $\kappa ( \, , \, )$ is an invariant symmetric bilinear form on $\A$ which we call the Killing form. Note that if $\A$ is a Lie algebra then $\kappa ( \, , \, )$ coincides with the Killing form. As usual we define the radical of $\kappa ( \, , \, )$ by 
$${\A}^{\perp}=\{b\in {\A} \mid \kappa (b, a)=0 \ \ {\rm for \, \,  all} \ \ a\in {\A}\}.$$
Since the form $\kappa ( \, , \, )$ is invariant (i.e. $\kappa( [a, b], c)=\kappa (a, [b, c])$) the radical of the form ${\A}^{\perp}$ is an ideal of $\A$. It is also clear that $Leib({\A})\subseteq {\A}^{\perp}$.

\begin{defn} 
Let $\A$ be a Leibniz algebra. The Killing form $\kappa ( \, , \, )$ on $\A$ is said to be nondegenerate if ${\A}^{\perp}=Leib({\A})$.
\end{defn}

Suppose $\A$ be a nilpotent Leibniz algebra. Then by Engel's theorem (see Corrolary \ref{Engel2} ) the left multiplication operators $\{L_a \mid a \in {\A}\}$ can be simultaneously strictly upper triangularized. Hence the Killing form  $\kappa ( \, , \, )$ is trivial (i.e. $\kappa (a  , b) = 0$ for all $a, b \in \A$).

\begin{ex} Let ${\A} = {\rm span}\{h, e, f, x_0, x_1\}$ be a $5$-dimensional Leibniz algebra with the nontrivial multiplications given by:
\begin{equation*}
\begin{cases}
[h, e]=2e, [h, f]=-2f, [e, f]=h, [e, h]=-2e, [f, h]=2f, [f, e]=-h\\ 
[h, x_0]=x_0, [f, x_0]=x_1, [h, x_1]=-x_1, [e, x_1]=-x_0. 
\end{cases}
\end{equation*}
Then $rad({\A}) = Leib({\A}) = {\rm span}\{x_0, x_1\}$. Hence $\A$ is a semisimple Leibniz algebra. Also Killing form $\kappa ( \, , \, )$ on $\A$ is nondegenerate, since ${\A}^{\perp}=Leib({\A})$.
\end{ex}
\begin{thm} 
Let $\A$ be a semisimple Leibniz algebra. Then the Killing form $\kappa ( \, , \, )$ on $\A$ is nondegenerate.
\end{thm}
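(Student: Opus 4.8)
The plan is to prove the nontrivial inclusion $\A^{\perp} \subseteq Leib(\A)$, the reverse inclusion $Leib(\A) \subseteq \A^{\perp}$ having already been observed. By Theorem \ref{directsum} I may write $\A = S \dotplus R$ as vector spaces, where $S = S_1 \oplus \cdots \oplus S_k$ is a semisimple Lie subalgebra and $R := Leib(\A) = rad(\A)$. Given $b \in \A^{\perp}$, I decompose $b = s + r$ with $s \in S$ and $r \in R$. Since $R \subseteq \A^{\perp}$ and $\A^{\perp}$ is a subspace, $s = b - r \in \A^{\perp}$ as well, so it suffices to show that the restriction of $\kappa$ to $S$ is nondegenerate: this forces $s = 0$ and hence $b = r \in Leib(\A)$.

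To analyze $\kappa|_S$ I would exploit that $R = Leib(\A)$ is an ideal, so every left multiplication $L_a$ maps $R$ into $R$; relative to a basis adapted to $\A = S \oplus R$ (the $S$-part listed first) it therefore has block-triangular matrix $L_a = \begin{pmatrix} \bar L_a & 0 \\ C_a & D_a \end{pmatrix}$, where $\bar L_a$ is the operator induced on $\A/R \cong S$ and $D_a = L_a|_R$. Multiplying two such matrices and taking traces yields, for $s, s' \in S$,
\[
\kappa(s, s') = tr(L_s L_{s'}) = tr(\bar L_s \bar L_{s'}) + tr(D_s D_{s'}).
\]
Because $\A/R$ is a Lie algebra, $\bar L_s$ is exactly the adjoint operator $ad^S_s$ of $S$, so the first term is the Killing form of $S$. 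By the third identity in (\ref{LRidentity}) one has $[L_s, L_{s'}] = L_{[s,s']}$, so $s \mapsto D_s$ is a representation of $S$ on $R$ and the second term is its trace form.

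The main step, and the real obstacle, is to see that this combined form is nondegenerate on $S$; note that it is \emph{not} merely the Killing form of $S$, because of the extra contribution coming from the action on $Leib(\A)$, and one must rule out that this contribution produces degeneracy. The clean device is to package both pieces at once: the map $\sigma : S \to End(S) \oplus End(R)$, $\sigma(s) = (ad^S_s,\, D_s)$, is a representation of $S$, and the displayed identity says precisely that $\kappa(s,s') = tr(\sigma(s)\sigma(s'))$ for all $s, s' \in S$. Moreover $\sigma$ is faithful, since $ad^S_s = 0$ already forces $s \in Z(S) = 0$ by semisimplicity of $S$.

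It then remains to invoke the standard fact that the trace form of a faithful finite-dimensional representation of a semisimple Lie algebra is nondegenerate. Concretely, the radical $\mathfrak{r}$ of $tr(\sigma(\cdot)\sigma(\cdot))$ is an ideal of $S$ on which this trace form vanishes; applying Cartan's criterion inside $\sigma(S)$ shows $\sigma(\mathfrak{r})$ is solvable, whence $\mathfrak{r} \cong \sigma(\mathfrak{r})$ is a solvable ideal of the semisimple algebra $S$ and so $\mathfrak{r} = 0$. Therefore $\kappa|_S$ is nondegenerate, forcing $s = 0$ and $b \in Leib(\A)$. Combined with $Leib(\A) \subseteq \A^{\perp}$ this gives $\A^{\perp} = Leib(\A)$, i.e.\ the Killing form is nondegenerate.
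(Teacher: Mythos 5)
Your proof is correct, and in fact it is more careful than the paper's own argument, which follows the same skeleton: decompose ${\A}=S\dotplus Leib({\A})$ via Levi's theorem, reduce nondegeneracy to the restriction of $\kappa(\,,\,)$ to the Levi factor $S$, and conclude that the $S$-component of an element of ${\A}^{\perp}$ vanishes. The difference is at the crucial step. The paper writes $tr_S(L_sL_t)=tr_{\A}(L_sL_t)$ and then appeals only to nondegeneracy of the Killing form of the semisimple Lie algebra $S$; but that equality is false in general, for exactly the reason you isolate: on ${\A}=S\oplus Leib({\A})$ the operator $L_s$ acts block-diagonally as $ad^S_s$ on $S$ and as $D_s$ on $Leib({\A})$, so $tr_{\A}(L_sL_t)=\kappa_S(s,t)+tr(D_sD_t)$. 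The paper's own $5$-dimensional example exhibits the discrepancy: there $tr_{\A}(L_hL_h)=8+2=10$, while the Killing form of $sl(2,\mathbb{C})$ gives $\kappa_S(h,h)=8$. Thus the published proof has a genuine gap (one must rule out that the extra term $tr(D_sD_t)$ creates degeneracy), and your argument fills it: you recognize $\kappa|_S$ as the trace form of the faithful representation $\sigma=ad^S\oplus D$ of $S$ (faithful because $Z(S)=0$; a representation because $L_{[s,s']}=[L_s,L_{s'}]$ by the third identity in (\ref{LRidentity})) and prove nondegeneracy by Cartan's criterion, since the radical of an invariant trace form is a solvable ideal of the semisimple algebra $S$ and hence zero. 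So you reach the same conclusion by the same global route, but with the one nontrivial step actually justified rather than asserted; what your version buys is a correct proof, and what the paper's version buys is only brevity.
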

\begin{proof} Let $\A$ be a semisimple Leibniz algebra over $\ff$. Then by Theorem \ref{levi},
${\A} = S\dotplus Leib({\A})$ where $S$ is a semisimple Lie algebra. Let $x\in{\A}^{\perp}$, so $x$ can be written as $x=s+u$ for $s\in S, u\in Leib({\A})$. Since $S$ is a semisimple Lie algebra we have $S^{\perp}=0$. Suppose $s \neq 0$, then $s\notin S^{\perp}$. Hence by definition $\exists t\in S$ such that $tr_S(L_sL_t)=tr_{\A}(L_sL_t)\neq 0$. However, we know that $x\in {\A}^{\perp}$, and so $tr_{\A}(L_xL_a)=0$ for all $a\in \A$. In particular, $tr_{\A}(L_xL_t)=0$.
\begin{center}
$0=tr_{\A}(L_xL_t)=tr_{\A}(L_{s+u}L_t)=tr_{\A}(L_sL_t)+tr_{\A}(L_uL_t)=tr_A(L_sL_t),$
\end{center} 
since $L_uL_t = 0$. This is a contradiction. Hence $s = 0$, which implies $x\in Leib({\A})$. Thus, ${\A}^{\perp}=Leib({\A})$ and therefore the Killing form $\kappa ( \, , \, )$ on $\A$ is nondegenerate.
\end{proof}

However, unlike in case of Lie algebras as the example below shows the Killing form $\kappa ( \, , \, )$ on $\A$ being nondegenerate does not imply that $\A$ is a semisimple Leibniz algebra.
\begin{ex} Let ${\A} = {\rm span}\{x , y\}$ be the $2$-dimensional Leibniz algebra with the nontrivial multiplications given by:
\begin{center} $[y, x]=x, [y, y]=x$
\end{center}
Then ${\A}^{\perp}= {\rm span}\{x\}=Leib({\A})$. Hence the Killing form on $\A$ is nondegenerate. However, since $\A$ is solvable $rad({\A}) = \A$. Therefore,  $\A$ is not semisimple.
\end{ex}

\section{Classification of Low dimensional Leibniz algebras}

The classification of Leibniz algebras is still an open problem. So far the complete classification of Leibniz algebras of dimension less than or equal to three is known (see  \cite{omirov1998},  \cite{3dim}, \cite{CILL}, \cite{lodayfr}, \cite{3comp}) and partial results are known for dimension four (see \cite{fourdimnil}, \cite{fourdim}). In this paper we revisit the classification of non-Lie Leibniz algebras of dimension less than or equal to three. 

Suppose $\A$ be a non-Lie Leibniz algebra over $\ff$. Then $Leib({\A}) \neq 0$ and $Leib({\A}) \neq {\A}$. So there does not exist any non-Lie Leibniz algebra with ${\rm dim}({\A}) = 1$. Hence ${\rm dim}({\A}) \geq 2$. Now assume ${\rm dim}({\A}) = 2$. Since $Leib({\A}) \neq 0$, there exists $0\neq a \in \A$ such that $a^2 \neq 0$. Since $Leib({\A})$ is one-dimensional in this case, we have $Leib({\A}) = {\rm span}\{a^2\}$. Then $[a^2, a^2] = 0$ and $[a, a^2] = \alpha a^2$ for some $\alpha \in \ff$ since $Leib({\A})$ is an abelian ideal. Thus ${\A} = {\rm span}\{a, a^2\}$ and we have two possibilities: $\alpha = 0$ or $\alpha \neq 0$. If $\alpha = 0$, then $[a, a^2] = 0$ and $\A$ is a nilpotent cyclic Leibniz algebra generated by $a$. If $\alpha \neq 0$, then replacing $a$ by $\frac{1}{\alpha} a$ we see that $[a, a^2] = a^2$ and $\A$ is a solvable cyclic Leibniz algebra generated by $a$. Thus we have the following theorem.

\begin{thm}Let $\A$ be a non-Lie Leibniz algebra and ${\rm dim}({\A}) = 2$. Then $\A$ is isomorphic to a cyclic Leibniz algebra generated by $a$ with either $[a, a^2] = 0$ (hence $\A$ is nilpotent) or $[a, a^2] = a^2$ (hence $\A$ is solvable).
\end{thm}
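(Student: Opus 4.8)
The plan is to pin down the entire multiplication table of $\A$ from the structural constraints forced by $\A$ being non-Lie of dimension $2$, and then to normalize the single remaining free parameter. First I would observe that since $\A$ is non-Lie we have $0 \neq Leib({\A}) \neq {\A}$, so in dimension $2$ the ideal $Leib({\A})$ is exactly one-dimensional. Because $Leib({\A}) \neq 0$, there is an element $a$ with $a^2 \neq 0$, and then $Leib({\A}) = {\rm span}\{a^2\}$. The first point needing a short argument is that $a$ and $a^2$ are linearly independent: if $a \in Leib({\A})$, then since $Leib({\A})$ is abelian we would get $a^2 = [a,a] = 0$, a contradiction; hence $\{a, a^2\}$ is a basis and ${\A} = {\rm span}\{a, a^2\}$ is cyclic, generated by $a$.

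Next I would compute all four products on the basis. The identity $L_{a^n}=0$ for $n > 1$ recorded earlier gives $L_{a^2}=0$, so $[a^2, a] = [a^2, a^2] = 0$ at once. Since $Leib({\A})$ is a two-sided ideal, $[a, a^2] \in Leib({\A}) = {\rm span}\{a^2\}$, so $[a, a^2] = \alpha a^2$ for a unique $\alpha \in \ff$. Thus the multiplication is completely determined by the single scalar $\alpha$, with $[a,a]=a^2$, $[a,a^2]=\alpha a^2$, and the two remaining products zero. This is the heart of the argument: showing there is exactly one free structure constant, all other products being forced to vanish.

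Finally I would split into the two cases and normalize. If $\alpha = 0$, then ${\A}^2 = {\rm span}\{a^2\}$ and ${\A}^3 = [{\A}, {\A}^2] = 0$, so $\A$ is nilpotent with $[a,a^2]=0$. If $\alpha \neq 0$, replacing $a$ by $\alpha^{-1}a$ rescales $a^2$ to $\alpha^{-2}a^2$ and turns $[a,a^2]=\alpha a^2$ into $[a,a^2]=a^2$ in the new generator; here ${\A}^{(1)} = {\rm span}\{a^2\}$ and ${\A}^{(2)} = 0$ give solvability, while ${\A}^{k} = {\rm span}\{a^2\}$ for all $k \geq 2$ shows $\A$ is not nilpotent. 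I expect the only real care to be in verifying that the rescaling produces exactly the normalized bracket $[a,a^2]=a^2$, and in confirming that the two resulting algebras are genuinely distinct (separated by nilpotency), so that the dichotomy in the statement is both exhaustive and non-redundant.
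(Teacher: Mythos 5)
Your proposal is correct and follows essentially the same route as the paper: pick $a$ with $a^2 \neq 0$, note $Leib({\A})={\rm span}\{a^2\}$ is a one-dimensional abelian ideal forcing $[a,a^2]=\alpha a^2$ with all other products zero, then split on $\alpha=0$ versus $\alpha\neq 0$ and rescale $a\mapsto \alpha^{-1}a$. You merely supply a few details the paper leaves implicit (the linear independence of $a$ and $a^2$, the vanishing $[a^2,a]=0$ via $L_{a^2}=0$, and the explicit nilpotency/solvability verifications), all of which check out.
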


\begin{thm} Let $\A$ be a non-Lie Leibniz algebra and ${\rm dim}({\A}) \leq 4$. Then $\A$ is solvable.
\end{thm}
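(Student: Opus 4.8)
The plan is to argue by contradiction, exploiting the fact that the small dimension leaves essentially no room for a nonzero semisimple part. So suppose $\A$ is a non-Lie Leibniz algebra with $\dim(\A) \le 4$ that is \emph{not} solvable. By Levi's theorem (Theorem \ref{levi}) we may write $\A = S \dotplus R$ with $R = rad(\A)$ and $S$ a semisimple Lie algebra, and non-solvability forces $S \neq 0$. Over an algebraically closed field of characteristic zero the smallest nonzero semisimple Lie algebra is $sl(2,\ff)$, of dimension three, so $\dim(S) \ge 3$ and hence $\dim(\A) \ge 3$. Throughout I will use that $Leib(\A)$ is an abelian (hence solvable) ideal, so $Leib(\A) \subseteq R$, and that $\A$ being non-Lie means $Leib(\A) \neq 0$.

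Next I would split on the two remaining dimensions. If $\dim(\A) = 3$, then $\dim(S) \ge 3$ forces $S = \A$ and $R = 0$, so $Leib(\A) \subseteq R = 0$, contradicting $Leib(\A) \neq 0$. If $\dim(\A) = 4$, then since there is no semisimple Lie algebra of dimension four (the attainable dimensions being $3, 6, 8, \dots$), we must have $\dim(S) = 3$, i.e. $S \cong sl(2,\ff)$, and $\dim(R) = 1$. Because $0 \neq Leib(\A) \subseteq R$ with $\dim(R) = 1$, we get $Leib(\A) = R = \ff u$ for some $0 \neq u$.

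The crux is to show that this last configuration cannot occur, because it forces $Leib(\A) = 0$. Since $Leib(\A)$ is an ideal it is invariant under each $L_s$, and as $s \mapsto L_s$ is bracket-preserving on $S$ (the third identity in (\ref{LRidentity})), restriction gives a Lie algebra representation $S \to \mathfrak{gl}(Leib(\A))$ on a one-dimensional space. As $S \cong sl(2,\ff)$ is simple, it has no nontrivial one-dimensional representation, so $[s,u] = 0$ for all $s \in S$. For the right action write $[u,s] = \lambda_s u$; applying the identity $R_c R_b = -R_c L_b$ to $u$ and using $L_b u = [b,u] = 0$ gives $\lambda_b \lambda_c\, u = 0$ for all $b, c \in S$, whence $\lambda_s = 0$ for every $s \in S$. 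Thus $u$ is central. But then for any $a = s + \mu u$ with $s \in S$ and $\mu \in \ff$, using $[s,s]=0$ (as $S$ is a Lie algebra), $[s,u]=[u,s]=0$, and $[u,u]=0$, one computes $[a,a] = 0$; hence $Leib(\A) = 0$, a contradiction.

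I expect the main obstacle to be precisely this $\dim(\A) = 4$ semisimple case, namely verifying that the one-dimensional $Leib(\A)$ is central: the left action is killed by the representation theory of $sl(2,\ff)$, while the vanishing of the right action requires the structural identity $R_c R_b = -R_c L_b$ coming from the Leibniz identity. The dimension bookkeeping for semisimple Lie algebras (no example in dimension four or five) is routine once recalled, and the remaining cases reduce immediately to $Leib(\A) \subseteq rad(\A)$.
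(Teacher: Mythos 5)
Your proof is correct and takes essentially the same route as the paper's: the paper likewise reduces, via Levi's theorem and the dimension count for semisimple Lie algebras, to the case ${\A} = S \dotplus Leib({\A})$ with $S \cong s\ell(2,\ff)$ and $\dim(Leib({\A})) = 1$, kills the left action by the representation theory of a semisimple Lie algebra on a one-dimensional space, kills the right action, and concludes that all squares vanish, contradicting ${\A}$ being non-Lie. The only cosmetic difference is your right-action step: the paper gets $[Leib({\A}), S] = 0$ for free from the standing fact that $L_x = 0$ for $x \in Leib({\A})$ (a consequence of $L_{[a,a]} = [L_a, L_a] = 0$ noted in the preliminaries), whereas you re-derive it through $R_cR_b = -R_cL_b$ — valid, but unnecessary.
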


\begin{proof} Since $\A$ is a non-Lie Leibniz algebra, $Leib({\A}) \neq 0$. Hence ${\dim}(Leib({\A}))$ $ \geq 1$ and ${\dim}({\A}/Leib({\A})) \leq 3$. Then the Lie algebra ${\A}/Leib({\A})$ is either solvable or simple. If ${\A}/Leib({\A})$ is solvable, then $\A$ is solvable since $Leib({\A})$ is an abelian ideal. If ${\A}/Leib({\A})$ is a simple Lie algebra then ${\rm dim}({\A}/Leib({\A})) = 3$ and it is isomorphic to $s\ell(2, \mathbb{C})$. By Theorem \ref{levi}, we have ${\A}=S\dotplus Leib({\A})$ where $S$ is a subalgebra of $\A$ and as a Lie algebra it is isomorphic to $s\ell(2, \mathbb{C})$. Since ${\dim}(Leib({\A})) = 1$, the Lie algebra $S$ acts trivially on $Leib({\A})$ which implies $[S , Leib({\A})] = 0$. Since 
$[Leib({\A}), S] = 0$, it follows that $\A$ is a Lie algebra which is a contradiction.
\end{proof}

The following result from \cite{vgor} is also useful.
\begin{prop} \label{codim1} (\cite{vgor} , {\rm Corollary 3})  If the Leibniz algebra $\A$ is nilpotent of dimension $n$ and ${\rm dim}([{\A} , {\A}]) = n-1$, then $\A$ is a cyclic Leibniz algebra generated by a single element.
\end{prop}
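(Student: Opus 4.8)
The plan is to reduce the statement to showing that ${\A}$ is generated by a single element, and then to run a Frattini-type argument. Since $[{\A},{\A}] = {\A}^2$, the hypothesis $\dim([{\A},{\A}]) = n-1$ says precisely that ${\A}/{\A}^2$ is one-dimensional. So first I would pick $a \in {\A}$ whose image is a basis of ${\A}/{\A}^2$, giving ${\A} = {\ff}a + {\A}^2$, and let $B$ be the cyclic subalgebra generated by $a$ (as in Example \ref{cyclic}). Because $a \in B$ we then have ${\A} = B + {\A}^2$, and the whole problem reduces to proving that $B + {\A}^2 = {\A}$ forces $B = {\A}$.

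The key auxiliary fact I would establish first is the product containment $[{\A}^i, {\A}^j] \subseteq {\A}^{i+j}$ for all $i,j \geq 1$. This is where Proposition \ref{normed} does the work: an element of $[{\A}^i, {\A}^j]$ is a product of $i+j$ elements in some association, hence by Proposition \ref{normed} it is a linear combination of left normed products of those $i+j$ elements, each of which lies in ${\A}^{i+j}$ (the span of left normed products of $i+j$ elements).

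With this in hand I would run the Frattini induction adapted to the Leibniz setting, proving ${\A} = B + {\A}^k$ for all $k \geq 2$ by induction on $k$, the case $k=2$ being the hypothesis ${\A} = B + {\A}^2$. Assuming ${\A} = B + {\A}^k$, I would expand
\begin{equation*}
{\A}^2 = [B + {\A}^k,\, B + {\A}^k] \subseteq [B,B] + [B, {\A}^k] + [{\A}^k, B] + [{\A}^k, {\A}^k].
\end{equation*}
Here $[B,B] \subseteq B$ because $B$ is a subalgebra, while $[B,{\A}^k] \subseteq {\A}^{k+1}$, $[{\A}^k,B] \subseteq [{\A}^k,{\A}] \subseteq {\A}^{k+1}$, and $[{\A}^k,{\A}^k] \subseteq {\A}^{2k} \subseteq {\A}^{k+1}$, all by the containment above (using $2k \geq k+1$). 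Thus ${\A}^2 \subseteq B + {\A}^{k+1}$, and combining with ${\A} = B + {\A}^2$ gives ${\A} = B + {\A}^{k+1}$. Since ${\A}$ is nilpotent, ${\A}^{c+1} = 0$ for some $c$, so taking $k = c+1$ yields $B = {\A}$. Hence ${\A}$ is the cyclic Leibniz algebra generated by $a$.

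The main obstacle, and the only place the Leibniz (rather than Lie) structure matters, is controlling the mixed terms $[B,{\A}^k]$ and $[{\A}^k,B]$ separately: without antisymmetry one cannot collapse the two orders, so both must be bounded, and this is exactly what the left normed containment $[{\A}^i,{\A}^j]\subseteq {\A}^{i+j}$ — ultimately Proposition \ref{normed} — provides. I expect the verification of that containment to be the technically delicate step, while the induction itself is routine once it is available.
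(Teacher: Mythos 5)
Your proof is correct. Note first that the paper does not actually prove this proposition: it is quoted from \cite{vgor} (Corollary 3) with no argument supplied, so there is no internal proof to compare against; your writeup in effect supplies the missing self-contained proof, and it does so using only tools already in the paper. The two ingredients both check out. The containment $[{\A}^i,{\A}^j]\subseteq{\A}^{i+j}$ does follow from Proposition \ref{normed} exactly as you say, since the rewriting via the Leibniz identity $[[a,b],c]=[a,[b,c]]-[b,[a,c]]$ preserves the number of factors, and ${\A}^m$ is by definition the span of left normed products of exactly $m$ elements; alternatively you could get the same containment by a direct induction on $i$ (for $a\in{\A}$, $x\in{\A}^{i-1}$, $y\in{\A}^j$, write $[[a,x],y]=[a,[x,y]]-[x,[a,y]]$), which avoids invoking Proposition \ref{normed} in full. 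The Frattini induction ${\A}=B+{\A}^k\Rightarrow{\A}=B+{\A}^{k+1}$ is then routine as you note, and nilpotency kills the tail. One small point worth making explicit at the end: to land on the notion of cyclic algebra as defined in Example \ref{cyclic}, observe that the subalgebra $B$ generated by $a$ is precisely ${\rm span}\{a,a^2,a^3,\ldots\}$ with $a^{k+1}=[a,a^k]$ --- this is again Proposition \ref{normed} applied to products of copies of $a$, together with $a^k\in{\A}^k$ so that the powers terminate by nilpotency --- whence ${\A}=B$ is cyclic in the paper's sense. You rightly identify the failure of antisymmetry as the only genuine Leibniz-specific obstacle, since $[{\A}^k,B]$ cannot be folded into $[B,{\A}^k]$ and must be bounded separately by the two-sided containment.
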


\begin{thm}\label{nilclass} Let $\A$ be a non-Lie nilpotent Leibniz algebra and ${\rm dim}({\A}) = 3$. Then $\A$ is isomorphic to a Leibniz algebra spanned by $\{x, y, z\}$ with the nonzero products given by one of the following:
\begin{enumerate}
\item $[x, x] = y, [x, y] = z $.
\item $[x, x] = z$ .
\item $[x, y] = z, [y, x] = z$.
\item $[x, y] = z, [y, x] = -z, [y, y] = z$.
\item $[x, y] = z, [y, x] = \alpha z, \ \  \alpha \in \mathbb{C} \setminus \{1, -1\}$.
\end{enumerate}
\end{thm}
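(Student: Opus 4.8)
The plan is to split according to $\dim\A^2$, where $\A^2=[\A,\A]$. Since $\A$ is nilpotent we have $\A^2\subsetneq\A$, and since $\A$ is non-Lie we have $0\neq Leib(\A)\subseteq\A^2$; hence $\dim\A^2\in\{1,2\}$. The case $\dim\A^2=2$ is handled directly by Proposition \ref{codim1}: here $\dim\A=3$ and $\dim[\A,\A]=2=n-1$, so $\A$ is cyclic, $\A={\rm span}\{a,a^2,a^3\}$. By Corollary \ref{Engel2} the operator $L_a$ is nilpotent, and since $a$ is a cyclic vector for $L_a$ on the $3$-dimensional $\A$ we get $L_a^3=0$, i.e. $[a,a^3]=0$; setting $x=a$, $y=a^2$, $z=a^3$ yields exactly relations (1).

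The substance is the case $\dim\A^2=1$. First I would record the structure: since $Leib(\A)\neq0$, $Leib(\A)\subseteq\A^2$ and $\dim\A^2=1$, we get $\A^2=Leib(\A)={\rm span}\{z\}$. Nilpotency forces $\A^3\subsetneq\A^2$, so $\A^3=0$ and $\A$ has class $2$; by the fact $\A^c\subseteq Z(\A)$ recorded just before the statement, $z\in Z(\A)$, so $[a,z]=[z,a]=0$ for all $a$. Choosing $x,y$ with $\{x,y,z\}$ a basis, every product lands in ${\rm span}\{z\}$ and is encoded by a bilinear form $B$ on $V:={\rm span}\{x,y\}$ via $[u,v]=B(u,v)z$. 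Because the image is central the Leibniz identity holds automatically, so each $B$ yields a Leibniz algebra; moreover $\A$ is non-Lie iff the symmetric part $B_s$ is nonzero, since $Leib(\A)={\rm span}\{[u,u]\}={\rm span}\{B_s(u,u)z\}$. A direct check shows that two such algebras are isomorphic exactly when their matrices are related by congruence together with a rescaling of $z$: an isomorphism sends $z\mapsto\lambda z'$ and induces $P\in GL(V)$ with $\lambda[B]=P^T[B']P$.

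Thus the classification reduces to listing nonzero $2\times2$ matrices with $B_s\neq0$ up to congruence and scaling; this is where I expect the real work, and it is the ``canonical forms of congruence classes'' approach advertised in the introduction. I would organize it by rank and by the cosquare $C=M^{-T}M$ (for nondegenerate $M$), whose similarity class is a congruence-and-scaling invariant with $\det C=1$. For rank $1$, writing $M=vw^T$ with congruence acting on $v,w$ simultaneously, there are exactly two orbits: $v\parallel w$ (symmetric, relations (2)) and $v\not\parallel w$ (relations (5) with $\alpha=0$). For rank $2$, $C$ has eigenvalues $\{\lambda,1/\lambda\}$; a repeated eigenvalue must be $\pm1$, and eigenvalue $1$ forces $M-M^T$ singular, i.e. $B_a=0$ and $C=I$, so the only non-diagonalizable possibility is a single Jordan block at $-1$, which I would identify with relations (4). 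A diagonalizable $C$ with eigenvalues $\{\alpha,1/\alpha\}$ gives relations (5), with $\alpha=1$ ($C=I$) degenerating to the symmetric rank-$2$ form (relations (3)) and $\alpha=-1$ ($C=-I$) excluded as the antisymmetric, hence Lie, case. Normalizing $B_s$ by congruence first (to $I$ when nondegenerate, to a rank-$1$ diagonal otherwise) and tracking the induced action on $B_a=\beta J$ produces the explicit normal forms.

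The main obstacle is precisely this matrix classification: separating the Jordan-block case (4) from the diagonalizable family (5), and verifying that $\alpha$ is a genuine invariant. The cleanest way is to note that the unordered pair $\{\alpha,1/\alpha\}$ is exactly the cosquare spectrum, which simultaneously shows that distinct $\alpha$ (modulo $\alpha\leftrightarrow1/\alpha$) give non-isomorphic algebras and explains the built-in redundancy in the parametrization of (5).
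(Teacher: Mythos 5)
Your proposal is correct, and its skeleton is the same as the paper's: both split on $\dim {\A}^2 \in \{1,2\}$, dispatch $\dim {\A}^2 = 2$ via Proposition \ref{codim1} to get the cyclic algebra (1), and in the case $\dim {\A}^2 = 1$ encode the algebra by a bilinear form on a complement of ${\A}^2$ and classify the form. (One small slip: you cite Corollary \ref{Engel2} to get $L_a$ nilpotent, but that corollary is the converse direction; nilpotency of $\A$ gives $L_a^3=0$ directly from the definition of the lower central series.) Where you genuinely diverge is in how the form classification is executed and what it establishes. The paper works with congruence only: it cites the Turnbull--Aitken canonical forms for congruence classes of $2\times 2$ matrices, discards the alternating form $(i)$ as the Heisenberg Lie algebra, and reads off (2)--(5); it never verifies centrality of $z$ (used tacitly when asserting the products are ``completely determined'' by the form), and it does not address whether the listed algebras are pairwise non-isomorphic --- indeed they are not, since $\alpha$ and $1/\alpha$ in (5) give isomorphic algebras. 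You instead pin down the correct equivalence relation (congruence together with a rescaling of $z$, coming from $\varphi({\A}^2)={\A}'^2$), justify $z\in Z({\A})$ via class $2$ and ${\A}^c\subseteq Z({\A})$, derive the $2\times 2$ classification by hand via rank and the cosquare $M^{-T}M$, and use the cosquare spectrum as an invariant to prove the classes are distinct and to identify the built-in redundancy $\alpha \leftrightarrow 1/\alpha$. The paper's route buys brevity by outsourcing the canonical forms to a citation; yours buys a self-contained argument and a strictly stronger conclusion (an irredundant classification), at the cost of the rank-$2$ normalization step, which you only sketch (normalize $B_s$ to $I$ or ${\rm diag}(1,0)$, then track the induced action on $B_a=\beta J$) --- but for $2\times 2$ matrices over $\cc$ that sketch does go through, and your checks (eigenvalue $1$ forces $B_a=0$, the Jordan block at $-1$ is exactly class (4), $C=-I$ is the excluded antisymmetric case) are all accurate.
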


\begin{proof} Since $\A$ is nilpotent ${\A}^2 \neq {\A}$. Also ${\A}^2 \supseteq Leib({\A}) \neq 0$. So ${\rm dim}({\A}^2) = 2 \ \ {\rm or} \ \ 1$.
If ${\rm dim}({\A}^2) = 2$, then by Proposition \ref{codim1} $\A$ is generated by a single element $x \in {\A}$. Take $[x, x] = y, [x, y] = z$. Since $\A$ is nilpotent we now have $[x, z] = 0$.

Now suppose ${\rm dim}({\A}^2) = 1$. Then ${\A}^2 = Leib({\A})$ and ${\A}^2 = {\rm span}\{z\}$ for some $0 \neq z \in {\A}$. Let $V$ be a complementary subspace to ${\A}^2$ in $\A$. Then for any $u, v \in V$, we have $[u, v] = cz$ for some $c \in \mathbb{C}$. Define the bilinear form $f( \ , \ ) : V \times V \longrightarrow \mathbb{C}$ by $f(u, v) = c$ for all $u, v \in V$. The canonical forms for the congruence classes of matrices associated with the bilinear form $f( \ , \  )$ as given in \cite{TA} are listed below:
\vskip 5pt
\noindent $(i) \left( \begin{array}{cc}
0&1 \\
-1&0
\end{array} \right),
(ii) \left( \begin{array}{cc}
1&0 \\
0&0
\end{array} \right), 
(iii) \left( \begin{array}{cc}
0&1 \\
1&0
\end{array} \right),
(iv) \left( \begin{array}{cc}
0&1 \\
-1&1
\end{array} \right),
(v) \left( \begin{array}{cc}
0&1 \\
c&0
\end{array} \right)$, 

\noindent ${\rm where} \ \ c \neq 1, -1$.

Now we choose an ordered basis $\{x, y\}$ for $V$ with respect to which the matrix of the bilinear form $f( \ , \ )$ is one of the above. Then ${\A} = {\rm span}\{x, y, z\}$ and the products among the basis vectors are completely determined by the matrix of the bilinear form $f( \ , \ )$ as shown below.

If the matrix of the bilinear form $f( \ , \ )$ is given by $(i)$, then we have the nonzero products are $[x, y] = z , [y, x] = -z$ and $\A$ is a Heisenberg Lie algebra which is a contradiction since by assumption $\A$ is not a Lie algebra. Thus the matrix can not be given by $(i)$ in this case.

If the matrix of the bilinear form $f( \ , \ )$ is given by $(ii), (iii), (iv)$ or $(v)$, it is easy to see that the nonzero products are indeed given by equations $(2), (3), (4)$ or $(5)$ in the statement of the theorem respectively which completes the proof of the theorem.
\end{proof}

\begin{rmk} We observe that by suitable change of basis the isomorphism classes in Theorem \ref{nilclass} coincide with the isomorphism classes given in (\cite{CILL}, page 3752) as follows: $(1) \rightarrow 3(c), (2) \rightarrow 2(b), (3) \rightarrow 2(c), (4) \rightarrow 2(a) \ \ {\rm with}  \ \ \gamma = \frac{1}{4},
(5) \rightarrow 2(a) \ \ {\rm with} \ \ \gamma \neq \frac{1}{4}$. We also observe that our approach differs from the computational approach in  \cite{omirov1998}, \cite{3dim}, \cite{3comp} and \cite{CILL}. Indeed our approach can easily be adapted to classify all finite dimensional non-Lie nilpotent Leibniz algebras $\A$ with ${\rm codim}({\A}^2) = 1$ or ${\rm dim}({\A}^2) = 1$ . 
\end{rmk}

Finally, we state the classification of three dimensional non-Lie, solvable Leibniz algebra $\A$ which is not nilpotent. In this case ${\A}^2$ is nilpotent, hence the nilradical contains ${\A}^2$ and the isomorphism classes as given in (\cite{CILL}, page 3752) (see also \cite{omirov1998}, \cite{3dim}, \cite{3comp}) are as follows. (Note that in \cite{CILL} the classification is done for right Leibniz algebras).

\begin{thm}Let $\A$ be a non-Lie non-nilpotent solvable Leibniz algebra and ${\rm dim}({\A}) = 3$. Then $\A$ is isomorphic to a Leibniz algebra spanned by $\{x, y, z\}$ with the nonzero products given by one of the following:
\begin{enumerate}
\item $[z, x] = x $.
\item $[z, x] = \alpha x, \alpha \in \mathbb{C}\setminus \{0\};  [z, y] = y;  [y, z] = -y$.
\item $[z, y] = y; [y, z] = -y; [z, z] = x$.
\item $[z, x] = 2x; [y, y] = x; [z, y] = y; [y, z] = -y; [z, z] = x$.
\item $[z, y] = y; [z, x] = \alpha x, \ \  \alpha \in \mathbb{C} \setminus \{0\}$.
\item $[z, x] = x+y; [z, y] = y$.
\item $[z, x] = y; [z, y] = y; [z, z] = x$.
\end{enumerate}

\end{thm}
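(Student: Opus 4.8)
The plan is to carry out a structure-constant analysis organized by the dimension of the derived algebra, paralleling the proof of Theorem~\ref{nilclass}. First I would record the constraints forced by the hypotheses. Since $\A$ is solvable, $\A^2=[\A,\A]$ is nilpotent and properly contained in $\A$; since $\A$ is not nilpotent, $\A^2\neq 0$; and since $\A$ is non-Lie, $Leib(\A)\neq 0$. Writing $N=nil(\A)$ we have $0\neq \A^2\subseteq N\subsetneq\A$, so ${\rm dim}(\A^2)\in\{1,2\}$. Two facts simplify everything downstream: from $L_{a^2}=0$ one gets $Leib(\A)\subseteq Z^l(\A)$, so every element of $Leib(\A)$ is left-annihilating; and since $\A$ is not nilpotent, Corollary~\ref{Engel2} furnishes an element $z$ with $L_z$ not nilpotent, while Corollary~\ref{uptri} lets us take all the $L_a$ upper triangular, so $L_z$ carries a nonzero eigenvalue. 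I would also note that $\A^2\subseteq N$ forces $\A/N$ to be abelian and every product to land in $N$.

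The case ${\rm dim}(\A^2)=1$ proceeds as in Theorem~\ref{nilclass}. Here $\A^2=Leib(\A)={\rm span}\{x\}$ with $x$ left-central, and a complement $V$ with basis $\{y,z\}$ carries a bilinear form $f(u,v)=c$, where $[u,v]=c\,x$. The new data beyond the nilpotent case are the right multiplications by $x$, namely $[y,x]=px$ and $[z,x]=qx$; non-nilpotency forces $(p,q)\neq(0,0)$, and in fact these constants are exactly what make some $L_a$ non-nilpotent. I would normalize $f$ using the congruence canonical forms already tabulated in Theorem~\ref{nilclass} (discarding the antisymmetric representative, which returns a Lie algebra), and simultaneously normalize $(p,q)$ and rescale $z$; imposing the Leibniz identity~(\ref{leibnizid}) relates these constants and cuts the list down. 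The case ${\rm dim}(\A^2)=2$ has $\A^2=N$, a two-dimensional nilpotent Leibniz algebra, so $\A^2$ is either abelian or the two-dimensional cyclic nilpotent algebra, and $\A/\A^2$ is one-dimensional, spanned by $z$. Everything is then governed by the action of the pair $(L_z,R_z)$ on $\A^2$ together with the internal product of $\A^2$, and I would normalize $L_z$ by conjugacy (its Jordan type---diagonalizable with eigenvalues giving a parameter, or a single Jordan block---being the key invariant) after rescaling to fix a nonzero eigenvalue at $1$.

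I expect the main obstacle to be genuinely Leibniz-specific: the left multiplications do not determine the algebra, because the right multiplications $R_a$ carry independent data, constrained only through the identities~(\ref{LRidentity}) (for instance $R_cR_b=-R_cL_b$ and $L_bR_c=R_cL_b+R_{[b,c]}$) rather than being fixed by the $L_a$. Consequently, after pinning the $L_z$-action by conjugacy and the product form by congruence, one must still determine the right-multiplication constants, and it is precisely these that separate algebras with identical left actions---for example two families whose $L_z$ is diagonal split into the distinct one-parameter lists (items (2) and (5)) according to the value of $[y,z]$. Tracking which residual changes of basis preserve both normalizations while fixing the right action, verifying that the seven families exhaust all cases and satisfy~(\ref{leibnizid}), and checking that the invariants (Jordan type of $L_z$, congruence class of the form, and the right-multiplication data) make the families pairwise non-isomorphic, is the delicate bookkeeping that completes the argument.
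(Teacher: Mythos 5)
Your proposal diverges from the paper in the most basic possible way: the paper does not prove this theorem at all. It simply transcribes the isomorphism classes from (\cite{CILL}, page 3752), noting only that ${\A}^2$ is nilpotent (so the nilradical contains ${\A}^2$) and that \cite{CILL} works with right Leibniz algebras, so the list must be converted. You, by contrast, sketch a self-contained structural proof extending the paper's own method for the nilpotent case (Theorem \ref{nilclass}), which is precisely what the remark following that theorem advertises as the advantage of the bilinear-form approach over the computational classifications of \cite{omirov1998}, \cite{3dim}, \cite{3comp}, \cite{CILL}. Your skeleton is correct: $0 \neq {\A}^2 \subseteq nil({\A}) \subsetneq {\A}$ gives $\dim({\A}^2) \in \{1,2\}$; $L_{a^2}=0$ gives $Leib({\A}) \subseteq Z^l({\A})$; Corollaries \ref{uptri} and \ref{Engel2} produce an element $z$ with $L_z$ having a nonzero eigenvalue (since the diagonal of $L_a$ in a simultaneous triangularization is linear in $a$ and vanishes on ${\A}^2$); in the two-dimensional case ${\A}^2$ is abelian or the two-dimensional nilpotent cyclic algebra (the latter does occur, in item (4), where $[y,y]=x$ with $y \in {\A}^2$); and you correctly identify the right multiplications as the genuinely Leibniz-specific separating data, e.g.\ items (2) and (5) share $L_z$ and differ only in $[y,z]$. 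What your route buys is independence from the computer-assisted literature and reuse of the paper's own toolkit; what the citation buys the paper is certified completeness without the substantial bookkeeping you explicitly defer.

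One wrinkle in your $\dim({\A}^2)=1$ case deserves flagging. Unlike in Theorem \ref{nilclass}, the form $f$ is not well defined by the quotient: writing $[u,x]=\varphi(u)x$ on the complement $V$, replacing $y$ by $y+\lambda x$ changes $f(u,y)$ to $f(u,y)+\lambda\varphi(u)$, so when $\varphi \neq 0$ (which non-nilpotency forces) the congruence class of $f$ is not an invariant, and normalizing $f$ first by the table from Theorem \ref{nilclass} is the wrong order of operations. Moreover the Leibniz identity \eqref{leibnizid} applied to $a,b,c \in V$ yields $f(b,c)\varphi(a)=f(a,c)\varphi(b)$, i.e.\ $f = \varphi \otimes g$ has rank at most one, so the rank-two canonical forms $(i)$, $(iii)$, $(iv)$, and $(v)$ with $c \neq 0$ never arise; choosing $y \in \ker\varphi$, rescaling $z$ so $\varphi(z)=1$, and then shifting $y$ and $z$ by multiples of $x$ kills $g$ entirely, collapsing this whole case to the single algebra $(1)$. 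This is consistent with your caveat that the identity ``cuts the list down,'' but it means the case $\dim({\A}^2)=1$ is nearly trivial once the identity is imposed, and essentially all of the real work in your plan sits in the $\dim({\A}^2)=2$ case, where the pair $(L_z, R_z)$ together with the internal product of ${\A}^2$ must be normalized against the constraints \eqref{LRidentity} and the residual basis changes; that bookkeeping is acknowledged but not executed, so your proposal stands as a sound and genuinely different strategy rather than a finished proof.
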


\bibliographystyle{amsplain}

\end{document}